\documentclass[12pt,reqno]{amsart}
\usepackage{hyperref,cite}
\usepackage{amssymb,amsthm}
\usepackage{amsfonts,cmtiup,comment,stmaryrd}

\usepackage{mathrsfs,cmtiup}

\makeatletter
\def\blfootnote{\xdef\@thefnmark{}\@footnotetext}
\makeatother

\newtheorem{theorem}{Theorem}[section]
\newtheorem{lemma}[theorem]{Lemma}

\newtheorem{corollary}[theorem]{Corollary}

\theoremstyle{definition}

\newtheorem*{definition*}{Definition}
\numberwithin{equation}{section}

\begin{document}

\title{On finite groups containing an element\\ whose Engel sink is small}

\author{Evgeny Khukhro}
\address{E. I. Khukhro: Charlotte Scott Research Centre for Algebra, University of Lincoln, U.K.}
\email{khukhro@yahoo.co.uk}

\author{Pavel Shumyatsky}

\address{P. Shumyatsky: Department of Mathematics, University of Brasilia, DF~70910-900, Brazil}
\email{pavel@unb.br}

\thanks{The first author was partially supported by the International Center for Mathematics at SUSTech in Shenzhen. The second author was supported by  FAPDF and CNPq.}
\keywords{Finite groups; automorphism; right Engel sink; left Engel sink}
\subjclass[2020]{20D25, 20D45,  20F45}

\begin{abstract}  For an element $g$ of a group $G$, a right Engel sink of $g$ is a subset of $G$ containing all sufficiently long commutators $[...[[g ,x],x],\dots ,x]$ for all $x\in G$. A left Engel sink of $g$ is a subset of $G$ containing all sufficiently long commutators $[...[[x ,g ],g ],\dots ,g]$  for all $x\in G$. Using  the classification of finite simple groups we prove that if a finite group $G$ has an element $g$ such that $G=[G,g]$, then the order of $G$ is bounded in terms of a right Engel sink of $g$,
as well as in terms of a left Engel sink of $g$.
Earlier Guralnick and Tracey proved this in the case where $g$ is an involution without using the classification.

\end{abstract}

\maketitle

\section{Introduction}
We use the abbreviated notation for Engel words $[a,{}_nb]=[...[[a,b],b],\dots ,b]$, where $b$ is repeated $n$ times.
A \emph{right Engel sink} of an element $h$ of a group $H$ is a
subset $R\subseteq H$ containing all sufficiently long commutators $[h,{}_nx]$, that is, for every $x\in H$  there is a positive integer $r(x,h)$ such that $[h,{}_n x]\in  R$ for any $n\geqslant r(x,h)$.  A left Engel sink of $h\in H$ is a subset $L\subseteq H$ containing all sufficiently long commutators $[x ,{}_n h ]$,  that is, for every $x\in H$  there is a positive integer $l(x,h)$ such that $[x,{}_n h]\in  L$ for any $n\geqslant l(x,h)$. Clearly, in a finite group $H$, every element $h\in H$ has a minimal right Engel sink  ${\mathscr R}(h)$, and a minimal left Engel sink  ${\mathscr L}(h)$. An element $h\in H$ is right Engel precisely when  ${\mathscr R}(h)=\{1\}$, and $h\in H$ is left Engel precisely when  ${\mathscr L}(h)=\{1\}$.

It was proved \cite{khu-shu17, kst, gur-tra} that the (generalized) Fitting height and non-soluble length of the left and right Engel sinks of a given element of a finite group are closely  linked with its `global' position in the (generalized) Fitting series and upper nonsoluble series of the group. These results can be viewed as generalizations of theorems of Baer \cite[12.3.7]{rob}, by which a left Engel element of a finite group belongs to the Fitting subgroup, and a right Engel element belongs to the hypercentre. In particular, Guralnick and Tracey \cite{gur-tra} proved that if $\alpha$ is an automorphism of a finite group $G$ such that $G=[G,\alpha ]$, then
$G=\langle {\mathscr L}(\alpha )\rangle$ (where ${\mathscr L}(\alpha )$ is computed in the semidirect product $G\langle\alpha \rangle$).
Of course, results in terms of automorphisms can also be applied to inner automorphisms, that is, to group elements.

In this paper we consider finite groups $G$ containing an element $g$ such that $G=[G,g]$. We prove that the order of $G$ is bounded in terms of $|{\mathscr R}(g)|$, as well as  in terms of $|{\mathscr L}(g)|$. These results are obtained in more general form for the right and left sinks of an automorphism $\varphi$ such that $G=[G,\varphi]$. If $\varphi$ is the inner automorphism of $G$ induced by conjugation by an element $g$, then in the semidirect product $G\langle\varphi\rangle$  we have
$[x,\varphi ]=[x,g]$ for any $x\in G$. Hence  the right and left sinks ${\mathscr R}_{G\langle\varphi\rangle}(\varphi)$ and ${\mathscr L}_{G\langle\varphi\rangle}(\varphi)$ of $\varphi$ in the semidirect product $G\langle\varphi\rangle$ are equal to ${\mathscr R}(g)$ and ${\mathscr L}(g)$, respectively. The corresponding results about automorphisms are more general, since they apply not only to inner automorphisms.

We first state the results about the right sink.

\begin{theorem}\label{t-r}
Suppose that $\varphi$ is an automorphism of a finite group $G$ such that $G=[G,\varphi ]$. Then the order of $G$ is bounded above in terms of the cardinality of the right Engel sink ${\mathscr R}(\varphi)$ of $\varphi$ in $G\langle\varphi\rangle$.
\end{theorem}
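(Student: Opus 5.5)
We argue by reduction to the case where $G$ is a minimal counterexample-type structure, setting $m=|{\mathscr R}(\varphi)|$ and writing $R={\mathscr R}(\varphi)$. Two basic properties will be used throughout. First, sinks pass to $\varphi$-invariant quotients: if $N$ is a normal $\varphi$-invariant subgroup, the image of $R$ is a right sink of the induced automorphism on $G/N$, so $|{\mathscr R}_{G/N}(\varphi)|\le m$. Second, $[G/N,\varphi]=G/N$. Hence it suffices to bound $|G|$ via a bounded-length chain of $\varphi$-invariant normal subgroups whose factors have bounded order. Note that $G=[G,\varphi]$ forces $\varphi$ to act nontrivially on every nontrivial $\varphi$-invariant quotient.

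\textbf{Step 1 (abelian sections).} Let $V$ be an abelian $\varphi$-invariant section on which $\varphi$ acts with $[V,\varphi]=V$. In $V\langle\varphi\rangle$ we have $[\varphi,v]=v^{-1}v^{\varphi}$ up to inversion, and $[\varphi,{}_n v]$ is essentially $[v,\varphi]$ modified by powers. The cleaner route is to take $x=v\varphi^{-1}$-type elements, or to use the identity $[\varphi, {}_{n}v\varphi^{k}]$, so that the long commutators run over $[V,\varphi]^{(\varphi-1)^{n}}$-type images. For $V$ elementary abelian and $\varphi$ acting as a linear map $A$, we get the following: since $[V,\varphi]=V$, $A-1$ is invertible, and long commutators $[\varphi,{}_n u]$ for suitable $u\in V\langle\varphi\rangle$ cover $\{v(A-1)^{n}\}$, which is all of $V$. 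Hence $|V|\le m+1$. This bounds all abelian chief-type factors on which $\varphi$ acts fixed-point-freely. The soluble radical is then handled by a standard argument: $[G,\varphi]=G$ together with bounded $\varphi$-chief factors gives bounded Fitting height, via the Fitting height results of \cite{khu-shu17,kst} in terms of sinks. The number of factors is controlled because each nontrivial $[F,\varphi]$-layer contributes distinct sink elements.

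\textbf{Step 2 (nonsoluble part, using CFSG).} For a $\varphi$-invariant chief factor $S_1\times\dots\times S_k$ of nonabelian simple groups, the results of \cite{gur-tra,kst} bound the nonsoluble length. We must bound both $k$ and $|S_i|$. Here $\varphi$ permutes the $S_i$. If some orbit is long, commutators with elements supported on different factors produce many distinct elements in $R$, which bounds the orbit lengths and $k$. For a single simple $S$ with automorphism $\psi$ (a power of $\varphi$ composed along the orbit), we need: the minimal right sink of a nontrivial automorphism of a finite simple group $S$ in $S\langle\psi\rangle$ has size tending to infinity with $|S|$. This is where the classification is needed. We would show that for $S$ of Lie type of large rank or over a large field, one can pick $x$ in a torus or unipotent subgroup such that the sequence $[\psi,{}_n x]$ is eventually periodic with long period (e.g.\ $x$ of large order not centralized by $\psi$). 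For alternating groups we would use explicit 3-cycles or long cycles, and sporadic groups are bounded anyway.

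\textbf{Main obstacle.} The key difficulty is the simple group estimate in Step 2: proving that $|{\mathscr R}(\psi)|\to\infty$ as $|S|\to\infty$, uniformly over automorphisms $\psi$ including field and graph automorphisms. The first attempt would be to find a $\psi$-invariant abelian subgroup $A\le S$ with $[A,\psi]$ large. We would then apply the argument of Step 1 inside $A\langle\psi\rangle$, since sinks computed inside a subgroup are contained in the global sink. This gives $|[A,\psi]|\le m+1$. Existence of such $A$ of size growing with $|S|$ should follow from the structure of maximal tori and root subgroups.
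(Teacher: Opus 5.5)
Your Step~1 is correct: with $x=u\varphi^{-1}$, $u\in V$, one gets $[\varphi,{}_n x]=[u,{}_n\varphi^{-1}]$. So an abelian section $V$ with $[V,\varphi]=V$ lies in the image of the sink and $|V|\le m$, which is the paper's Lemma~\ref{l-rabelian}. The proof nonetheless has two genuine gaps.

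\textbf{First gap: the number of factors.} Nothing in your argument bounds how many $\varphi$-chief factors there are. Nor does it bound the abelian ones on which $\varphi$ has fixed points, e.g.\ central factors with trivial $\varphi$-action. Bounded Fitting height and nonsoluble length do not help: for a $p$-group $G=[G,\varphi]$ you get $|G/G'|\le m$ and Fitting height $1$, but no bound on the class. Your sentence ``each nontrivial $[F,\varphi]$-layer contributes distinct sink elements'' is precisely what needs proof, and it would still leave the layers with trivial action uncontrolled.

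The missing idea in the paper is induction on $m$ together with the following observation. Suppose $H$ is a $\varphi$-invariant normal subgroup modulo which the sink does not shrink. Then each $a\in{\mathscr R}(\varphi)$ is the only sink element in its coset $Ha$. Hence for large $i$ and all $h\in H$ one has $[\varphi,{}_i(hx)]=a$ and $[a,hx]=[\varphi,{}_{i+1}(hx)]=[a,x]$, so $[a,h]=1$. Since $G$ is the normal closure of ${\mathscr R}(\varphi)$, this gives $H\le Z(G)$, and Schur's theorem together with $|G/G'|\le m$ bounds $|G|$. When the sink does shrink modulo a minimal $\varphi$-invariant $N$, $|G/N|$ is bounded by induction. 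A noncentral abelian $N$ is then bounded via $|N:C_N(a)|\le (m-1)^2$ for $a\in{\mathscr R}(\varphi)\setminus C_G(N)$, plus the boundedly many conjugates of $C_N(a)$.

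\textbf{Second gap: the simple-group step.} A $\psi$-invariant abelian $A$ with $|[A,\psi]|$ large need not exist. Let $\psi$ be induced by a unipotent element $u$ of order $p$ in $PSL_2(p^k)$, $p$ odd. Every $u$-invariant abelian subgroup either lies in the unique Sylow $p$-subgroup containing $u$, which is abelian and hence centralized by $u$, or is a Klein four-group (only when $p=3$). This is because normalizers of the other nontrivial abelian $p'$-subgroups are dihedral of order $p^k\pm1$. So $|[A,\psi]|\le 4$ always. Step~1 also requires $[A,\psi]=A$, which fails whenever a $p$-element acts on a $p$-group.

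The paper instead works with a subgroup \emph{not} normalized by $\varphi$: a cyclic Sylow subgroup $P$, which is TI by Blau's theorem. For $1\ne x\in P$, no $[\varphi,{}_k x]$ normalizes $P$. Pigeonholing the pairs $u,v$ of sink elements with $[u,x]=v$ over all $x\in P$ gives $|P|\le (m-1)^2$. Bertrand's postulate, Weyl groups, regular unipotent elements and Zsigmondy primes then bound $|S|$.

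Finally, when there are $k\ge 2$ simple factors, your $\psi=\varphi^k|_{S_1}$ may be trivial, and right sinks are not known to pass to powers. The paper instead extracts the left sinks ${\mathscr L}_S(y)$ from $[\varphi,{}_n(x,y,1,\dots)]$ and applies Theorem~\ref{t-old-l}.
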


Note that if  ${\mathscr R}(\varphi)=\{1\}$, then $\varphi$ belongs to the hypercentre of $G\langle\varphi\rangle$, and then the condition $G=[G,\varphi ]$ implies that $G=1$. In general, it is not true that $|[G,\varphi ]|$ is bounded in terms of $|{\mathscr R}(\varphi)|$. It is worth mentioning two cases where this is true. First, this is true if $G$ is a simple group. In fact, when $G$ is a non-abelian simple group, the result is somewhat stronger, namely, $|G|$ is bounded in terms of the cardinality of a smaller set ${\mathscr R}_G(\varphi)$, which consists of all sufficiently long commutators $[\varphi ,{}_n x]$ for $x\in G$ (rather than $x\in G\langle\varphi\rangle$).

\begin{theorem}
  \label{t-rsimple}
  Suppose that $\varphi$ is a nontrivial automorphism of a non-abelian finite simple group $G$. Then the order of $G$ is bounded above in terms of the cardinality of the right Engel sink ${\mathscr R}_G(\varphi)$.
\end{theorem}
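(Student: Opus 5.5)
Write $R={\mathscr R}_G(\varphi)$ and $m=|R|$. The plan is to bound $|G|$ by forcing $G$ to contain a subgroup of bounded order that reveals its size. The argument has three steps.

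\textbf{Step 1: sinks are closed under centralizers.} For $x\in C_G(\varphi)$ every commutator $[\varphi,{}_nx]$ is trivial. More usefully, $R$ is invariant under conjugation by elements of $C_G(\varphi)$. Indeed, for $c\in C_G(\varphi)$ we have
\[
[\varphi,{}_n x]^c=[\varphi,{}_n x^c],
\]
and $x^c$ ranges over $G$. Hence the subgroup $\langle R\rangle$ is normalized by $C_G(\varphi)$.

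\textbf{Step 2: the sink generates $G$.} We show that $\langle R\rangle\neq 1$ and that in fact $G=\langle R\rangle$.

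If $R=\{1\}$, then $\varphi$ is a right Engel element in the relevant sense. By Baer's theorem (applied in $G\langle\varphi\rangle$, with $x$ restricted to $G$) this forces $\varphi$ to act nilpotently on $G$. The standard argument then gives $[G,\varphi]$ inside a hypercentral piece, which is impossible for $G$ simple nonabelian with $\varphi\neq 1$.

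More quantitatively, I would prove the following. For each $x\in G$, the sequence $[\varphi,{}_n x]$ eventually lies in $R$. Using $[\varphi,x]=(x^{-1})^{\varphi}x$ (up to convention), the elements $[\varphi,{}_n x]$ form a periodic sequence in $R$. One checks, as in the Guralnick--Tracey setting, that a periodic part $y=[\varphi,{}_n x]$ satisfies $y=[y,{}_k x]$ for some $k$. It follows that $x$ normalizes no proper subgroup containing $y$ unless $y$ lies in a suitable Engel-type configuration.

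The aim is to show that $G=\langle R\rangle$, or at least that $\langle R\rangle$ contains a nontrivial normal subset and hence equals $G$ by simplicity. The cleanest route: $\langle R\rangle$ is invariant under $\varphi$ as well. For $x\in G$, apply $\varphi$ to $[\varphi,{}_nx]$; since $[\varphi,{}_nx]^\varphi$ is again a long commutator of the same shape in $x^\varphi$, this gives $\varphi$-invariance. Combining this with a Baer-type argument (if $\langle R\rangle$ were a proper subgroup, pass to an $x$ acting on $G/\ldots$) is unclear in the simple case. So I would instead argue as follows. The set $S$ of elements $x$ with $[\varphi,{}_nx]=1$ eventually is proper, because otherwise $\varphi$ is right Engel in $G\langle \varphi\rangle$ and lies in the hypercentre, which is trivial. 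Then, using the conjugation action from Step 1, $R\setminus\{1\}$ is nonempty.

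\textbf{Step 3: reduce to bounding orders of elements, via CFSG.} The key is that every nontrivial element of $R$ arises as $y=[\varphi,{}_nx]$ with $y\in[y,\langle x\rangle]$-type periodicity. This gives $|x|$-related bounds: the orbit of $y$ under the map $z\mapsto [z,x]$ has size at most $m$.

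So choose $x$ to be an element of large order (for example, a regular semisimple element of a Lie-type group, or a long cycle in $A_n$) for which $[\varphi,x]\neq 1$. Then the elements $[\varphi,{}_nx]$ generate a subgroup normalized by $x$ in which the commutator map with $x$ permutes at most $m$ elements. Bound $|x|$ accordingly. The crude bound comes from the fact that $x^{m!}$ centralizes the periodic part.

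Then use CFSG in the following form: a nonabelian simple group in which a given nontrivial automorphism $\varphi$ has the property that every element $x$ with $x^{m!}$ centralizing some element $\neq 1$ of a set of size $\le m$ \dots\ has bounded order. Concretely, for alternating groups $A_n$ and groups of Lie type of large rank or field size, I would exhibit an $x$ (a Singer-type cycle or large-order semisimple element, or a $p$-element of large order) whose centralizer is small and which moves $\varphi$. This forces $m$ to be large.

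\textbf{Main obstacle.} The main obstacle is making this last step uniform across families. The plan there is to pick $x$ in a cyclic maximal torus $T$ that $\varphi$ does not normalize, and show that the $\langle x\rangle$-orbit structure of the periodic commutators has size comparable to $|x|$, which tends to infinity with $|G|$. Sporadic groups are finitely many and can be ignored.
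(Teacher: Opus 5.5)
There is a genuine gap, and it is in Step~3, the only place where $|G|$ is supposed to get bounded. The map $z\mapsto[z,x]$ is not conjugation by $x$. Since $z^x=z[z,x]$, conjugation by $x$ sends a cycle element $y_i$ to $y_iy_{i+1}$, which need not lie in the cycle. So a limit cycle of length $\ell\leqslant m$ for one element $x$ says nothing about $x^{m!}$ centralizing it.

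A cycle of length one just means $[y,x]=y$, i.e.\ $y^x=y^2$, and this allows $|x|$ to be arbitrarily large. For example, in $\mathrm{PSL}_2(q)$ for suitable odd $q$, take a diagonal element $x$ acting on a root subgroup $U\cong\mathbb{F}_q$ as multiplication by $2$. Then $[y,x]=y$ for all $y\in U$, while $|x|$ is at least the multiplicative order of $2$ modulo $p$. Even if $x^{m!}$ did centralize $y$, that would not bound $|x|$, since elements of large order routinely have nontrivial centralizers.

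So ``bound $|x|$ accordingly'' has no content. The CFSG statement you then invoke is not a well-defined assertion (it trails off), and the ``Main obstacle'' paragraph restates what has to be proved. Step~2 is also shaky: ${\mathscr R}_G(\varphi)$ only controls $x\in G$, so Baer's theorem in $G\langle\varphi\rangle$ does not apply directly. In any case $G=\langle R\rangle$ is not needed.

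What is missing is a counting argument over many different elements $x$ of one cyclic subgroup, made injective by a TI property. The paper takes a cyclic Sylow $s$-subgroup $S$ not normalized by $\varphi$. Such an $S$ exists, since otherwise $G=[G,\varphi]$ would normalize every Sylow $s$-subgroup. By Blau's theorem, $S$ is a TI-subgroup.
\begin{itemize}
\item For $1\neq x\in S$: if $y$ does not normalize $S$ but $[y,x]=(x^{-1})^yx$ does, then $(x^{-1})^y\in S$, and the TI property forces $y$ to normalize $S$.
\item Inductively, starting from $\varphi$, no $[\varphi,{}_kx]$ normalizes $S$. Hence each such $x$ yields $u,v\in{\mathscr R}_G(\varphi)\setminus\{1\}$ with $[u,x]=v$ and $u$ not normalizing $S$.
\item If two distinct $x_1,x_2$ gave the same pair $(u,v)$, then $u$ would centralize $1\neq x_1x_2^{-1}\in S$ and hence normalize $S$. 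So $|S|$ is bounded by roughly $(m-1)^2$.
\end{itemize}

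The uniform step you leave open is then done family by family:
\begin{itemize}
\item Bertrand's postulate handles $A_n$.
\item Weyl-group sections $A_d$, together with the existence of a cyclic Sylow subgroup, bound the Lie rank.
\item The same pigeonhole argument over generators of $\langle g\rangle$, for $g$ regular unipotent and hence in a unique Sylow $p$-subgroup, bounds $p$.
\item Zsigmondy primes from Guralnick--Shareshian--Woodroofe, whose Sylow subgroups are cyclic, bound the field degree.
\end{itemize}
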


Another case is that of a coprime automorphism.

\begin{corollary}
  \label{c-r}
  Suppose that $\varphi$ is an automorphism of a finite group $G$ of coprime order: $(|G|,|\varphi|)=1$.  Then the order of $[G,\varphi ]$ is bounded above in terms of the cardinality of the right Engel sink ${\mathscr R}(\varphi)$ of $\varphi$ in $G\langle\varphi\rangle$.
\end{corollary}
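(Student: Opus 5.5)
We deduce the corollary from Theorem~\ref{t-r} applied to the group $H=[G,\varphi]$ in place of $G$. The point is that for a coprime automorphism, $[G,\varphi]$ is $\varphi$-invariant and satisfies $[G,\varphi,\varphi]=[G,\varphi]$. This is a standard fact about coprime action: for $(|G|,|\varphi|)=1$ one has $[G,\varphi]=[G,\varphi,\varphi]$; see, for example, Gorenstein's book, Theorem~5.3.6 in the soluble case. In general it follows from $G=[G,\varphi]C_G(\varphi)$, which holds for coprime action and relies on the Feit--Thompson theorem only through Schur--Zassenhaus conjugacy when neither group is soluble. Hence $H=[H,\varphi]$ for $H=[G,\varphi]$, where $\varphi$ is restricted to $H$.

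Next we compare the Engel sinks. Let ${\mathscr R}_{H\langle\varphi\rangle}(\varphi)$ denote the minimal right Engel sink of $\varphi$ computed in $H\langle\varphi\rangle$, a subgroup of $G\langle\varphi\rangle$. For every $x\in H\langle\varphi\rangle$, the commutators $[\varphi,{}_n x]$ are the same whether computed in $H\langle\varphi\rangle$ or in $G\langle\varphi\rangle$. Therefore ${\mathscr R}(\varphi)\cap H\langle\varphi\rangle$ is a right Engel sink of $\varphi$ in $H\langle\varphi\rangle$. By minimality,
\[
|{\mathscr R}_{H\langle\varphi\rangle}(\varphi)|\le |{\mathscr R}(\varphi)|.
\]
(Here it does not matter whether $\langle\varphi\rangle$ is taken as the cyclic group of automorphisms, since $H\langle\varphi\rangle$ embeds in $G\langle\varphi\rangle$ either way.)

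Applying Theorem~\ref{t-r} to the automorphism $\varphi|_H$ of $H$, which satisfies $H=[H,\varphi]$, we conclude that $|H|=|[G,\varphi]|$ is bounded in terms of $|{\mathscr R}_{H\langle\varphi\rangle}(\varphi)|$, and hence in terms of $|{\mathscr R}(\varphi)|$. This completes the argument.

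The only nontrivial input is the identity $[G,\varphi,\varphi]=[G,\varphi]$ under coprimeness. I would cite it as a standard result rather than reprove it. If an elementary route were needed, one could argue by induction on $|G|$, using the facts that $\varphi$-invariant Sylow subgroups exist and that the equality holds for abelian groups by Fitting's lemma. Compared with this, the sink comparison step is routine.
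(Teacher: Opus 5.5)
Your argument is correct and is exactly the deduction the paper intends, though the paper gives no separate proof: apply Theorem~\ref{t-r} to $[G,\varphi]$, using $[[G,\varphi],\varphi]=[G,\varphi]$ (recorded as Lemma~\ref{l-nakr}(b)) and the monotonicity ${\mathscr R}_{[G,\varphi]\langle\varphi\rangle}(\varphi)\subseteq{\mathscr R}_{G\langle\varphi\rangle}(\varphi)$ stated in the preliminaries. Your Feit--Thompson caveat does not arise here, because $\langle\varphi\rangle$ is cyclic and hence soluble.
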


We now state the results about the left sink ${\mathscr L}(\varphi)$. Note that for left sinks we have ${\mathscr L}_{G\langle\varphi\rangle}(\varphi)={\mathscr L}_G(\varphi)$.

\begin{theorem}\label{t-l}
Suppose that $\varphi$ is an automorphism of a finite group $G$ such that $G=[G,\varphi ]$. Then the order of $G$ is bounded above in terms of the cardinality of the left Engel sink ${\mathscr L}(\varphi)$.
\end{theorem}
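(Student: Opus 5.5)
Set $m=|{\mathscr L}(\varphi)|$. The plan is to bound $|G|$ by induction on $|G|$, with the bound depending on $m$ only. The proof rests on two facts and a reduction.

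\emph{Fact 1: sinks behave well under quotients.} For a $\varphi$-invariant normal subgroup $N$, the image of ${\mathscr L}(\varphi)$ in $G/N$ contains a left Engel sink of $\varphi$ there, so $|{\mathscr L}_{G/N}(\varphi)|\le m$. Moreover $[G/N,\varphi]=G/N$.

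\emph{Fact 2: the sink generates.} By the theorem of Guralnick and Tracey, $G=[G,\varphi]$ gives $G=\langle {\mathscr L}(\varphi)\rangle$. So $G$ is generated by at most $m$ elements. The set ${\mathscr L}(\varphi)$ is $\varphi$-invariant, because $[x,{}_n\varphi]^\varphi=[x^\varphi,{}_n\varphi]$. Also, on each element of ${\mathscr L}(\varphi)$ the map $y\mapsto[y,\varphi]$ permutes the sink: the minimal sink consists of the elements lying on cycles of this map. In particular, every $y\in{\mathscr L}(\varphi)$ satisfies $[y,{}_k\varphi]=y$ for some $k\le m$.

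\emph{Step 1: the soluble radical.} First I bound the index of the soluble radical $R$ of $G$.
\begin{itemize}
\item Factor out $R$ and pass to the $\varphi$-invariant socle of $G/R$. This socle is a product of nonabelian simple groups permuted by $\varphi$.
\item $G/R$ embeds into the automorphism group of the socle. Since $G/R$ is generated by at most $m$ elements and is the image of $[G,\varphi]$, it suffices to bound the number of simple factors and their orders.
\item \emph{Number of factors.} The quotient $G/R$ acts on the set of factors, and $\varphi$ together with $G$ moves only boundedly many of them. A factor fixed by $G$ but moved by $\varphi$ yields, via the diagonal-type standard argument, sink elements $[x,{}_n\varphi]$ in distinct coordinates. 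These give at least as many sink elements as there are factors in an orbit, so the orbits have size at most about $m$. Factors on which $\varphi$ and $G$ act trivially contribute nothing to $[G,\varphi]$ modulo the kernel.
\item \emph{Orders of the factors.} For a $\varphi$-orbit of simple factors, the corresponding power of $\varphi$ acts on one factor $S$, and its left sink there is bounded. I need the left analogue of Theorem~\ref{t-rsimple}: for a nontrivial automorphism of a nonabelian simple group, $|S|$ is bounded in terms of $|{\mathscr L}_S(\varphi)|$. I would prove this via the classification, using the fact that $x\mapsto[x,\varphi]$ has many periodic points unless $S$ is small. For example, commutators $[x,\varphi]$ range over a large set, and the Engel map restricted to suitable subgroups such as tori or $\varphi$-invariant Sylow subgroups generates many distinct cycle elements.
\end{itemize}
This is the main obstacle, and I would try to reduce it to the right-sink result by relating left and right sinks in simple groups.

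\emph{Step 2: the soluble case.} With $|G/R|$ bounded, I next treat soluble groups. By induction through a $\varphi$-invariant chief series, it suffices to bound $|V|$ for a minimal $\varphi G$-invariant abelian normal section $V$ with $[V,\varphi]\ne1$.
\begin{itemize}
\item In additive notation, ${\mathscr L}(\varphi)\cap V$ contains the image of the eventual map $(\varphi-1)^n$.
\item Its $\varphi$-invariant "Fitting" part $V_1=\bigcap_n V(\varphi-1)^n$ has $|V_1|\le m+1$.
\item Then $V=V_0\oplus V_1$, where $\varphi-1$ is nilpotent on $V_0$.
\item Using $G=[G,\varphi]$ and $G$-irreducibility, together with the fact that $G$ is generated by sink elements $y$ with $[y,{}_k\varphi]=y$, the part $V_0$ must be controlled. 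Conjugating by these generators moves $V_0$ into components where $\varphi-1$ is invertible, which bounds $\dim V$.
\end{itemize}

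\emph{Step 3: assembly.} Finally, I bound the Fitting height via Fitting-height results from \cite{khu-shu17, kst}. I also bound the number of chief factors, by showing that each contributes a new sink element. Together with Steps 1 and 2, this gives a bound on $|G|$ in terms of $m$.
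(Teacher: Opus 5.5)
Your outline has genuine gaps in all three steps, and they come from one missing idea. Besides being $\varphi$-invariant, $\mathscr L(\varphi)$ is invariant under conjugation by $C_G(\varphi)$, because $[x,{}_n\varphi]^c=[x^c,{}_n\varphi]$ for $c\in C_G(\varphi)$. Since $G=\langle\mathscr L(\varphi)\rangle$, the kernel of this action lies in $Z(G)$. Hence if $Z(G)=1$, both $|C_G(\varphi)|$ and $|\varphi|$ are $m$-bounded, and Hartley's generalized Brauer--Fowler theorem bounds $|G/S(G)|$ at once.

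Applied to $G/R$, which has trivial centre, this settles your Step~1 outright. As written, Step~1 does not stand: your factor count (``$\varphi$ together with $G$ moves only boundedly many of them'') is unjustified. The left analogue of Theorem~\ref{t-rsimple}, which you call the main obstacle, is never proved in your sketch; it also follows from this observation plus Hartley.

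The same observation is what bounds an abelian minimal normal $\varphi$-invariant subgroup $N$ in the paper. There $|C_N(\varphi)|$ is bounded and $[C_N(\varphi_p),\varphi_{p'}]\subseteq\mathscr L(\varphi)$, so $|C_N(\varphi_p)|$ is bounded, and Lemma~\ref{l-pp} finishes since $|\varphi_p|$ is bounded. In your Step~2 the splitting $V=V_0\oplus V_1$ bounds only $V_1=[V,\varphi_{p'}]$. Nothing bounds $V_0=C_V(\varphi_{p'})$, and conjugation by sink elements does not preserve the $\varphi$-decomposition, so ``moving $V_0$ into components where $\varphi-1$ is invertible'' is not an argument. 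Chief factors with $[V,\varphi]=1$ are not bounded at all in your scheme.

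Step~3 is false as stated: chief factors need not add sink elements. Let $G$ be extraspecial of order $27$ and exponent $3$, generated by $x,y$, and let $\varphi$ send $x\mapsto x^{-1}$, $y\mapsto y^{-1}$. Then $\varphi$ centralizes $Z(G)$ (as $[x^{-1},y^{-1}]=[x,y]$) and $G=[G,\varphi]$. Since $\varphi^2=1$, every $[g,\varphi]$ is inverted by $\varphi$, and $[u,\varphi]=u^{-2}=u$ for inverted $u$. So $\mathscr L(\varphi)$ is exactly the set of elements inverted by $\varphi$. There is exactly one such element in each coset of $Z(G)$, whence $|\mathscr L_G(\varphi)|=|\mathscr L_{G/Z(G)}(\varphi)|=9$.

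This is why the paper obtains the strict drop $|\mathscr L_{G/N}(\varphi)|<m$ only when $Z(G)=1$: otherwise $C_N(\varphi)\ne 1$ would centralize $\mathscr L(\varphi)$ and so lie in $Z(G)$. That bounds $|G/\zeta_\infty(G)|$. The paper then needs two further ingredients that your plan lacks:
\begin{itemize}
\item the Kurdachenko--Subbotin generalization of Baer's theorem, to bound $|\gamma_\infty(G)|$;
\item a separate nilpotent argument: reduce to a $p$-group with $G=[G,\varphi_{p'}]$, bound the class by induction on $m$ using $1\ne[\gamma_{c-1}(G),\varphi_{p'}]\subseteq\mathscr L(\varphi)$, and combine $|G/G'|\le m$ with Lemma~\ref{l-nilporder}.
\end{itemize}
Fitting-height results cannot replace this treatment of the hypercentral part.
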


Note that if  ${\mathscr  L}(\varphi)=\{1\}$, then $\varphi$ belongs to the Fitting subgroup of $G\langle\varphi\rangle$, and then the condition $G=[G,\varphi ]$ implies that $G=1$. In general, it is not true that $|[G,\varphi ]|$ is bounded in terms of $|{\mathscr L}(\varphi)|$. This is clearly true when $G$ is a simple group. Another case is that of a coprime automorphism.

\begin{corollary}
  \label{c-l}
Suppose that $\varphi$ is an automorphism of a finite group $G$ of coprime order: $(|G|,|\varphi|)=1$.  Then the order of $[G,\varphi ]$ is bounded above in terms of the cardinality of the left Engel sink ${\mathscr L}(\varphi)$.
\end{corollary}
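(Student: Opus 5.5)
We will deduce Corollary~\ref{c-l} from Theorem~\ref{t-l}, applied to the subgroup $H=[G,\varphi]$ with the restriction of $\varphi$. Two facts are needed: that $H$ satisfies the hypothesis $H=[H,\varphi]$, and that the left Engel sink of $\varphi$ computed in $H\langle\varphi\rangle$ is no larger than ${\mathscr L}(\varphi)$ computed in $G\langle\varphi\rangle$.

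\emph{The hypothesis.} Here we use the coprime-action identity $[G,\varphi,\varphi]=[G,\varphi]$, valid for any group of automorphisms $A=\langle\varphi\rangle$ of a finite group $G$ with $(|G|,|A|)=1$. This gives $[H,\varphi]=[[G,\varphi],\varphi]=[G,\varphi]=H$. Since $H$ is $\varphi$-invariant, $\varphi$ restricts to an automorphism of $H$. (The identity is standard: it follows from $G=[G,A]C_G(A)$ and the normality of $[G,A]$ in $G$; for soluble $G$ it is elementary, and in general it does not require the classification.)

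\emph{The sink.} Compare the minimal left Engel sink ${\mathscr L}_{H}(\varphi)$ with ${\mathscr L}_G(\varphi)$. For $x\in H$, the commutators $[x,{}_n\varphi]$ are the same elements whether computed in $H\langle\varphi\rangle$ or in $G\langle\varphi\rangle$. Since $G$ is finite, the minimal left sink ${\mathscr L}_G(\varphi)$ is the set of all elements $[x,{}_n\varphi]$ that recur infinitely often in the sequence $([x,{}_k\varphi])_k$, taken over all $x\in G$; the sequence is eventually periodic, and the minimal sink is the union of the periodic parts. The minimal sink for $H$ is the same union taken only over $x\in H$, so ${\mathscr L}_H(\varphi)\subseteq{\mathscr L}_G(\varphi)$. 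Hence $|{\mathscr L}_H(\varphi)|\le|{\mathscr L}(\varphi)|$.

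Theorem~\ref{t-l}, applied to $H$ and $\varphi$, now bounds $|H|=|[G,\varphi]|$ in terms of $|{\mathscr L}_H(\varphi)|\le|{\mathscr L}(\varphi)|$. The bounding function can be taken monotone by replacing $f(m)$ with $\max_{k\le m}f(k)$. The only nontrivial input beyond Theorem~\ref{t-l} is the identity $[G,\varphi,\varphi]=[G,\varphi]$, which is the one place where coprimeness is used.
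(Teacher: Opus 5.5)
Your proposal is correct and is exactly the intended derivation. The paper gives no separate proof of this corollary; it follows by applying Theorem~\ref{t-l} to $[G,\varphi]$, using $[[G,\varphi],\varphi]=[G,\varphi]$ from Lemma~\ref{l-nakr}(b) and the inclusion ${\mathscr L}_{[G,\varphi]}(\varphi)\subseteq {\mathscr L}_G(\varphi)$ recorded in \S\,\ref{s-prel}, just as you do.
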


Another corollary of general nature, which does not require the condition  $G=[G,\varphi ]$, is about the subgroup $\langle {\mathscr L}(\varphi)\rangle$ generated by the left Engel sink.

\begin{corollary}
  \label{c-lr}
Suppose that $\varphi$ is an automorphism of a finite group $G$.  The order of $\langle {\mathscr L}(\varphi)\rangle$ is bounded above
\begin{itemize}
  \item[\rm (a)]  in terms of the cardinality of ${\mathscr L}(\varphi)$, and
  \item[\rm (b)]  in terms of the cardinality of ${\mathscr R}_{G\langle \varphi\rangle}(\varphi)$.
\end{itemize}
\end{corollary}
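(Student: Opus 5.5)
The plan is to reduce both parts to Theorems~\ref{t-l} and~\ref{t-r}, applied to the subgroup $H=\langle {\mathscr L}(\varphi)\rangle$ with the restriction of $\varphi$. This needs three things: $H$ is $\varphi$-invariant, $H=[H,\varphi]$, and the relevant sinks computed for $H$ are contained in the corresponding sinks computed for $G$.

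\textbf{Describing ${\mathscr L}(\varphi)$.} Consider the map $\theta\colon G\to G$, $x\mapsto [x,\varphi]$, so that $[x,{}_n\varphi]=\theta^n(x)$. Since $G$ is finite, for each $x$ the sequence $\theta^n(x)$ is eventually periodic. Every set that eventually captures all these sequences must contain all periodic points of $\theta$, and the set of periodic points is itself such a set. Hence
$$
{\mathscr L}(\varphi)=\{z\in G\mid z=[z,{}_k\varphi]\text{ for some }k\geqslant 1\}.
$$
Because $[x,{}_n\varphi]^\varphi=[x^\varphi,{}_n\varphi]$, the set ${\mathscr L}(\varphi)$ is $\varphi$-invariant, and therefore so is $H$.

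\textbf{Showing $H=[H,\varphi]$.} This is the main point. Take $z\in{\mathscr L}(\varphi)$ with $z=[z,{}_k\varphi]$. Then $z=[y,\varphi]$ with $y=[z,{}_{k-1}\varphi]$, and $y$ is again a periodic point of $\theta$, so $y\in{\mathscr L}(\varphi)\subseteq H$. Thus ${\mathscr L}(\varphi)\subseteq[H,\varphi]$. Since $[H,\varphi]\leqslant H$, it follows that $H=[H,\varphi]$.

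The same description, applied inside $H$, gives ${\mathscr L}_H(\varphi)\subseteq{\mathscr L}(\varphi)$. Here $\varphi$ may act on $H$ non-faithfully, but it still acts as the automorphism $\varphi|_H$, and commutators $[x,\varphi]$ for $x\in H$ are the same whether computed in $H\langle\varphi|_H\rangle$ or in $G\langle\varphi\rangle$. Theorem~\ref{t-l} then bounds $|H|$ in terms of $|{\mathscr L}_H(\varphi)|\leqslant|{\mathscr L}(\varphi)|$, which proves~(a).

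\textbf{Part (b).} Apply Theorem~\ref{t-r} to $H$ with $\varphi|_H$, using $H=[H,\varphi]$ as established above. It remains to show that the right sink of $\varphi|_H$ in $H\langle\varphi|_H\rangle$ has cardinality at most $|{\mathscr R}_{G\langle\varphi\rangle}(\varphi)|$.

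Work first in the subgroup $H\langle\varphi\rangle$ of $G\langle\varphi\rangle$. For $x\in H\langle\varphi\rangle$, the commutators $[\varphi,{}_n x]$ lie in $H\langle\varphi\rangle$ and eventually lie in ${\mathscr R}_{G\langle\varphi\rangle}(\varphi)$. So ${\mathscr R}_{G\langle\varphi\rangle}(\varphi)\cap H\langle\varphi\rangle$ is a right sink of $\varphi$ in $H\langle\varphi\rangle$, and the minimal right sink there is no larger.

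Next pass to $H\langle\varphi|_H\rangle$. This group is the quotient of $H\langle\varphi\rangle$ by the central subgroup $C_{\langle\varphi\rangle}(H)$. The image of a right sink is a right sink for the image element, and the image of $\varphi$ is $\varphi|_H$. Hence the minimal right sink of $\varphi|_H$ has cardinality at most $|{\mathscr R}_{G\langle\varphi\rangle}(\varphi)|$, and Theorem~\ref{t-r} bounds $|H|$, proving~(b).
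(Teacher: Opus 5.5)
Your proposal is correct and follows the paper's proof. The paper likewise uses that $u\mapsto[u,\varphi]$ is surjective on ${\mathscr L}(\varphi)$ to get $[\langle{\mathscr L}(\varphi)\rangle,\varphi]=\langle{\mathscr L}(\varphi)\rangle$, and then applies Theorems~\ref{t-l} and~\ref{t-r}. You additionally spell out the sink inclusions and the passage to the faithful restriction $\varphi|_H$, which the paper leaves implicit.
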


Part (b) generalizes the well-known fact that a right Engel element in a finite group is necessarily left Engel: for any element $g$ of any finite group  $G$, the cardinality of the left Engel sink ${\mathscr L}(g)$ is bounded above in terms of the cardinality of the right Engel sink  ${\mathscr R}(g)$.

The proofs of our results rely on the classification of finite simple groups. It is worth mentioning that for an involution $\tau \in \mathop{Aut}G$ such that $G=[G,\tau ]$
Guralnick and Tracey \cite[Theorem~1.7]{gur-tra} proved that $|G|$ is bounded in terms of $|{\mathscr L}(\tau)|$ without using the classification. The same result for $|{\mathscr R}(\tau)|$ follows easily from the case of $|{\mathscr L}(\tau)|$; see Corollary~\ref{c-gtright} below for more details.

Other previous results concerning Engel sinks dealt with restrictions imposed on Engel sinks of all elements of a group or a subgroup. Recall that a group $G$ is called an Engel group if all elements of $G$ are left Engel, or equivalently, if all elements of $G$ are right Engel. By the elementary theorem of Zorn \cite[12.3.4]{rob}, an Engel finite group is nilpotent. Much more sophisticated results \cite{med,wi-ze} based on the works of Zelmanov \cite{ze92,ze95,ze17} establish (local) nilpotency of certain types of infinite groups with Engel-type conditions. But in general, say, Engel residually finite groups need not be locally nilpotent, as first shown by Golod's examples~\cite{gol}. Imposing restrictions on the right or left Engel sinks of group elements provides  natural generalizations of Engel groups. In several recent papers \cite{as19, bcs, khu-shu18,khu-shu18a,khu-shu21,khu-shu21a,khu-shu25} it was shown that if a finite, or profinite, or compact group satisfies restrictions on the right or left Engel sinks of all of its elements, then the group is close to be (locally) nilpotent. In other papers restrictions on Engel sinks were imposed on all elements of the fixed-point subgroup of an automorphism \cite{aks19, ass19, khu-shu22, khu-shu22a,khu-shu22b,khu-shu23}. In this paper we  use one of the results  of Khukhro and Shumyatsky \cite[Theorem~3.1]{khu-shu18} by which, if, for a positive integer $m$, all elements of a finite group $G$ have left Engel sinks of cardinality at most $m$, then $G$ has a normal subgroup of order bounded above in terms of $m$ with nilpotent quotient.

We give some key definitions with related observations and recall a few known results  in \S\,\ref{s-prel}. Theorem~\ref{t-rsimple} about the right sink of an automorphism of a non-abelian simple group is proved in \S\,\ref{s-rsimple}. The proof uses the classification of finite simple groups and the existence of cyclic Sylow $r$-subgroups for certain Zsigmondy primes $r$ established by Guralnick, Shareshian, and Woodroofe \cite{gsw}. We also use the fact that cyclic Sylow subgroups in finite simple groups are TI-subgroups proved by Blau \cite{bla}. In \S\,\ref{s-rgeneral} we prove Theorem~\ref{t-r} about the right sink of an element in an arbitrary finite group. Theorem~\ref{t-l} about the left sink is proved in \S\,\ref{s-left}. There we use a result of Guralnick and Tracey \cite[Theorem~1.4]{gur-tra} about generation by the left sink,  Hartley's generalized Brauer--Fowler theorem \cite[Theorem~A]{har}, and a generalization of Baer's theorem proved by Kurdachenko and Subbotin \cite[Theorem~3.2]{kur-sub}.

\section{Preliminaries}\label{s-prel}

Throughout the paper all groups are finite.
We use standard notation for terms of the lower central series $\gamma_1(G)=G$ and $\gamma_{k+1}(G)=[\gamma_{k+1}(G),G]$. The nilpotent residual is denoted by $\gamma_{\infty}(G)=\bigcap_i\gamma_i(G)$. The hypercentre of $G$ is denoted  by $\zeta_{\infty}(G)$.
We recall a well-known fact about nilpotent groups.

\begin{lemma}\label{l-nilporder}
If $G$ is a nilpotent group of class $c$ and $|G/G'|=t$, then $|G|$ is bounded in terms of $t$ and $c$.
\end{lemma}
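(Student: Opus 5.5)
We argue by induction on the nilpotency class $c$, using the standard fact that the lower central factors are images of tensor powers of the abelianization. Concretely, for each $i\geqslant 1$ the commutator map
\[
G/G'\times \gamma_i(G)/\gamma_{i+1}(G)\to \gamma_{i+1}(G)/\gamma_{i+2}(G),\qquad (xG',\,y\gamma_{i+1}(G))\mapsto [y,x]\gamma_{i+2}(G),
\]
is well defined and bilinear. This follows from the commutator identities $[yy',x]=[y,x]^{y'}[y',x]$ and $[y,xx']=[y,x'][y,x]^{x'}$ together with the fact that conjugation acts trivially modulo $\gamma_{i+2}(G)$. Since $\gamma_{i+1}(G)=[\gamma_i(G),G]$ is generated by the commutators $[y,x]$, the image of this map generates $\gamma_{i+1}(G)/\gamma_{i+2}(G)$. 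Hence $\gamma_{i+1}(G)/\gamma_{i+2}(G)$ is an epimorphic image of the tensor product
\[
G/G'\otimes_{\mathbb Z}\gamma_i(G)/\gamma_{i+1}(G).
\]

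The tensor product of two finite abelian groups of orders $a$ and $b$ has order at most $ab$. Indeed, it is generated by the elements $u\otimes v$ with $u,v$ running over generating sets, and writing the two groups as direct products of cyclic groups shows that $\mathbb Z/m\otimes\mathbb Z/n\cong\mathbb Z/\gcd(m,n)$. Therefore $|\gamma_{i+1}(G)/\gamma_{i+2}(G)|\leqslant t\cdot|\gamma_i(G)/\gamma_{i+1}(G)|$. By induction on $i$ we obtain $|\gamma_i(G)/\gamma_{i+1}(G)|\leqslant t^i$.

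Finally, since $\gamma_{c+1}(G)=1$, we have
\[
|G|=\prod_{i=1}^{c}|\gamma_i(G)/\gamma_{i+1}(G)|\leqslant t^{1+2+\dots+c}=t^{c(c+1)/2},
\]
which is a bound in terms of $t$ and $c$ alone.

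The only step requiring any care is checking that the commutator map is well defined and bilinear modulo $\gamma_{i+2}(G)$. This reduces to the inclusions $[\gamma_{i+1}(G),G]\subseteq\gamma_{i+2}(G)$ and $[\gamma_i(G),G']\subseteq\gamma_{i+2}(G)$. The latter follows from the three subgroup lemma, or more directly from the standard inclusion $[\gamma_i(G),\gamma_j(G)]\subseteq\gamma_{i+j}(G)$.
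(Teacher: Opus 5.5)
Your approach is the same as the paper's: the paper just cites Robinson 5.2.5, that $\gamma_i(G)/\gamma_{i+1}(G)$ is an epimorphic image of the $i$-th tensor power of $G/G'$. Your check that the commutator map is well defined and bilinear is fine. The counting step, however, is wrong. A tensor product of finite abelian groups of orders $a$ and $b$ need \emph{not} have order at most $ab$. Write $A=\bigoplus_j\mathbb{Z}/m_j$ and $B=\bigoplus_k\mathbb{Z}/n_k$. Your own formula $\mathbb{Z}/m\otimes\mathbb{Z}/n\cong\mathbb{Z}/\gcd(m,n)$ gives $|A\otimes B|=\prod_{j,k}\gcd(m_j,n_k)$, a product over all \emph{pairs} of cyclic factors. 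For example, $(\mathbb{Z}/2)^3\otimes(\mathbb{Z}/2)^3\cong(\mathbb{Z}/2)^9$ has order $512>64$.

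Consequently your explicit bounds $|\gamma_i(G)/\gamma_{i+1}(G)|\leqslant t^i$ and $|G|\leqslant t^{c(c+1)/2}$ are false. Take an odd prime $p$ and the relatively free group of rank $6$ in the variety of groups of class at most $2$ and exponent $p$. It has $t=|G/G'|=p^6$ and $|G'|=p^{15}>t^2$, so $|G|=p^{21}>t^3$.

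The lemma only asks for some bound in terms of $t$ and $c$, so the repair is easy. Use $A\otimes B\cong\bigoplus_j B/m_jB$, which gives $|A\otimes B|\leqslant |B|^{d(A)}$, where $d(A)$ is the number of cyclic factors of $A$. Put $A=G/G'$ and $d=d(G/G')\leqslant\log_2 t$. Then $|\gamma_{i+1}(G)/\gamma_{i+2}(G)|\leqslant|\gamma_i(G)/\gamma_{i+1}(G)|^{d}$, so by induction $|\gamma_i(G)/\gamma_{i+1}(G)|\leqslant t^{d^{i-1}}$. Hence $|G|\leqslant t^{1+d+\dots+d^{c-1}}$, which is bounded in terms of $t$ and $c$ as required. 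Equivalently, the $i$-th tensor power of an abelian group of order $t$ is generated by $d^i$ elements of order at most $t$.
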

\begin{proof}
This follows from the fact that the $i$-th factor of the lower central series $\gamma_i(G)/\gamma_{i+1}(G)$ is a homomorphic image of the $i$-th tensor power of $G/G'$; see, for example, \cite[5.2.5]{rob}.
\end{proof}

\subsection*{\bf Left Engel sinks.} Let $h$ be an element of a group $H$, and $K\leqslant H$ an $h$-invariant subgroup. A \emph{left Engel sink} of $h$ in $K$ is a subset $L\subseteq K$ containing all sufficiently long commutators $[x,{}_nh]$ for $x\in K$, that is, for every $x\in K$  there is a positive integer $l(x,h)$ such that $[x,{}_n h]\in  L$ for any $n\geqslant l(x,h)$. The intersection of two such left sinks is again a left sink of $h$ in $K$. Hence there is a minimal left sink of $h$ in $K$ denoted by  ${\mathscr L}_K(h)$. When $K=H$ we write simply ${\mathscr L}(h)$. Speaking of a left Engel sink of an element we shall always mean its smallest left Engel sink.

The  left sink ${\mathscr L}_K(h)$ is $h$-invariant.  Clearly, ${\mathscr L}_{K_1}(h)\subseteq {\mathscr L}_K(h)$ for an $h$-invariant subgroup $K_1\leqslant K$, and the image of ${\mathscr L}_K(h)$ in a quotient group $\bar H=H/N$ is the left Engel sink ${\mathscr L}_{\bar K}(\bar h)$ of the image of $h$ in the image of~$K$.     We shall freely use these properties without special references.

 For every element $x\in K$, either $[x,{}_n h]=1$ for some $n$, or $[x,{}_nh]=[x,{}_{n+m}h]\ne 1$ for some positive integers $m,n$. When $m+n$ is minimal possible in this equation, the elements $[x,{}_ih]$ for $i=n,n+1,\dots , n+m-1$ form a cycle under the mapping $u\to [u,h]$ and this \emph{limit cycle} is contained in ${\mathscr L}_K(h)$. Clearly, ${\mathscr L}_K(h)$ is the union of limit cycles of elements of~$K$. A limit cycle may consist of a single element, but all elements of limit cycles are nontrivial by definition. If $N$ is a normal subgroup, then a limit cycle is either entirely contained in $N$ or has empty intersection with $N$.

 The following theorem was proved by Khukhro and Shumyatsky \cite{khu-shu18}.

\begin{theorem}[{\cite[Theorem~3.1]{khu-shu18}}]\label{t-old-l}
Let $G$ be a finite group, and $m$ a positive integer. Suppose that for every $g\in G$ the cardinality of the left Engel sink ${\mathscr L}(g)$ is at most $m$.
 Then $G$ has a normal subgroup $N$ of order bounded in terms of $m$ such that $G/N$ is nilpotent.
\end{theorem}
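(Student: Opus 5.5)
The plan is to find a normal subgroup $N$ of $m$-bounded order such that every element of $G/N$ is left Engel. Then $G/N$ is nilpotent by Zorn's theorem \cite[12.3.4]{rob}. The basic local observation concerns a $p$-element $g$ acting on a $g$-invariant $q$-subgroup $Q$ with $q\ne p$. By coprime action, $[Q,g]=[Q,g,g]$. On any $g$-invariant abelian section $V$ of $[Q,g]$, the map $v\mapsto [v,g]$ is a bijection of $[V,g]$, because $C_V(g)\cap[V,g]=1$. Hence every nontrivial element of $[V,g]$ lies in a limit cycle. Passing to sections, and using that sinks of images are images of sinks, this gives $|[V,g]|\leqslant m+1$ for every such section. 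Since $[Q,g]$ is nilpotent of class bounded by the number of factors in a $g$-invariant series, I will combine this with Lemma~\ref{l-nilporder} to get $|[Q,g]|$ bounded in terms of $m$. Only primes $q\leqslant m+1$ can occur here, and the order of $g$ acting nontrivially on $Q$ is also bounded.

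The soluble case comes first. Let $F=F(G)$. For every $p$-element $g$ and every prime $q\ne p$, the subgroup $[O_q(F),g]$ has bounded order, and it is nontrivial only for boundedly many primes $q$. I would take $N_1$ to be the product of all the subgroups $[O_q(F),g]$, where $g$ runs over $p$-elements with $q\ne p$. This product is normal, and it is generated by normal subgroups $[O_q(F),g]^G=[O_q(F),g^G]$. To bound $|N_1|$, I would note that the automorphisms induced on $O_q(F)/\Phi$ by $q'$-elements have commutator subspaces of bounded dimension. A linear group generated by such elements, acting on $[V,G]$, should have bounded dimension by standard arguments: bounded-dimensional commutators combined with irreducibility. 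Modulo $N_1$, every $p$-element centralizes $O_{p'}(F)$. By the Hall--Higman lemma $C_G(F)\leqslant F$, this forces $G/N_1$ to be nilpotent, and the argument is iterated once more to handle the preimage.

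The main obstacle is the nonsoluble part. I would first show that every nonabelian composition factor $S$ of $G$ has $m$-bounded order. The plan is to use the classification: every large simple group $S$ contains an element $g$ of prime order $p$ normalizing a $q$-subgroup $Q$ with $[Q,g]$ large. For example, take a torus normalized by a Weyl group element in Lie type, or a large elementary abelian subgroup with a permuting element in alternating groups. By the bound above, this contradicts $|\mathscr L(g)|\leqslant m$. It is also necessary to lift sinks from sections of $G$ to $G$ itself; this is automatic, since images of sinks are sinks. Next I would bound the number of nonabelian factors in the generalized Fitting subgroup. A product of $k$ copies of $S$ permuted or acted on diagonally by $g$ has sinks of size growing with $k$. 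Then the soluble radical $R$ has bounded index modulo $C_G(F^*(G)/\ldots)$. Finally, the soluble argument is applied to $R$, together with bounded-index considerations, to produce $N$.
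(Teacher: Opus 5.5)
The paper does not prove this theorem; it quotes it from \cite{khu-shu18}. Your sketch therefore has to stand on its own, and its soluble part has genuine gaps.

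\emph{The bound for $|N_1|$.} This rests on a false principle. Bounded-dimensional commutator spaces of generators, even together with irreducibility, do not bound $[V,G]$. For example, $S_n$ acts irreducibly on the deleted permutation module over $\mathbb{F}_q$ ($q>n$, $q$ odd) and is generated by transpositions $t$ with $\dim[V,t]=1$, while $\dim[V,S_n]=n-1$. Imposing the bound on \emph{all} $q'$-elements does not help unless the module is completely reducible. On $V=\mathbb{F}_q^n$ ($q$ odd), let $U$ be the group of transvections $x\mapsto x+x_1w$ with $w\in\langle e_2,\dots ,e_n\rangle$, and let $t=\mathrm{diag}(-1,1,\dots ,1)$. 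Then $t$ inverts $U$, and every $q'$-element of $H=U\langle t\rangle$ is an involution $tu$ with $\dim[V,tu]=1$, yet $[V,H]=V$. Since $O_q(F)/\Phi(O_q(F))$ is not a completely reducible $G$-module in general, you cannot exclude this situation.

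\emph{Nilpotency of $G/N_1$.} Even with $|N_1|$ bounded, $G/N_1$ need not be nilpotent: for $G=S_4$ you get $N_1=V_4$ and $G/N_1\cong S_3$. The inclusion $C_G(F)\leqslant F$ concerns $F(G)$, not $F(G/N_1)$. So the construction must be repeated once per Fitting layer, and nothing in your sketch bounds the Fitting height.

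\emph{Class of $[Q,g]$.} The nilpotency class of $[Q,g]$ is not controlled by ``the number of factors in a $g$-invariant series'', since $g$ may centralize many lower central factors. You need an induction as in Lemma~\ref{l-lclass}, or Theorem~\ref{t-l} directly.

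\emph{Nonsoluble part.} The simple-group step is only asserted; Theorem~\ref{t-l} would give it, since $S=[S,s]$ for $1\ne s\in S$. The final ``bounded-index considerations'' are also not formal: $C_3^k\rtimes C_2$ with inversion has an abelian subgroup of index~$2$ but unbounded $\gamma_\infty$, so the hypothesis must be used again.

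\emph{What is missing.} The right intermediate target is a bound for $|G:F(G)|$.

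For soluble $G$ this comes from Gasch\"utz's theorem. $F(G)/\Phi(G)=\bigoplus_q V_q$ is a completely reducible $G$-module with centralizer $F(G)$. Hence $G/F(G)$ embeds in $\prod_q H_q$, where $H_q=G/C_G(V_q)$ has $O_q(H_q)=1$. Every $q'$-element $h$ of $H_q$ has $|[V_q,h]|\leqslant m+1$.
\begin{itemize}
\item For $q>m+1$ this forces $H_q=1$.
\item For $q\leqslant m+1$, the nilpotent $q'$-group $F(H_q)$ has bounded exponent and abelian subgroups of bounded rank. So $|F(H_q)|$ is bounded, and then so is $|H_q|\leqslant |F(H_q)|\,|\operatorname{Aut}F(H_q)|$.
\end{itemize}
Applying this to the soluble radical $R$, and using that $|G:R|$ is bounded, gives a bound for $|G:F(G)|$.

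Then finish as follows.
\begin{itemize}
\item Each subgroup $[O_q(F),g]$, for $g$ a $q'$-element, has bounded order.
\item These subgroups take only boundedly many values. Indeed, they depend only on $\langle g\rangle F/F$, since complements to $O_q(F)$ are $O_q(F)$-conjugate by Schur--Zassenhaus and $O_q(F)$ normalizes $[O_q(F),g]$.
\item Hence $N'=\prod_q[O_q(F),O^q(G)]$ has bounded order, and $F(G)N'/N'$ is hypercentral in $G/N'$.
\item The Kurdachenko--Subbotin form of Baer's theorem, as used in the proof of Theorem~\ref{t-l}, now bounds $|\gamma_\infty(G)|$, and $\gamma_\infty(G)$ is the required $N$.
\end{itemize}
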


\subsection*{\bf Right Engel sinks.} Let $h$ be an element of a group $H$, and $K\leqslant H$ an $h$-invariant subgroup. A \emph{right Engel sink} of $h$ in $K$ is a subset $R\subseteq K$ containing all sufficiently long commutators $[h,{}_nx]$ for $x\in K$, that is, for every $x\in K$  there is a positive integer $r(x,h)$ such that $[h,{}_n x]\in  R$ for any $n\geqslant r(x,h)$. The intersection of two such right sinks is again a right sink of $h$ in $K$. Hence there is a minimal right sink of $h$ in $K$ denoted by  ${\mathscr R}_K(h)$. When $K=H$ we write simply ${\mathscr R}(h)$. Speaking of a right Engel sink of an element we shall always mean its smallest right Engel sink.

 Clearly, ${\mathscr R}_{K_1}(h)\subseteq {\mathscr R}_K(h)$ for an $h$-invariant subgroup $K_1\leqslant K$, and the image of ${\mathscr R}_K(h)$ in a quotient group $\bar H=H/N$ is the right Engel sink ${\mathscr R}_{\bar K}(\bar h)$ of the image of $h$ in the image of $K$. The  right sink ${\mathscr R}_K(h)$ is $h$-invariant.   We shall freely use these properties  without special references.

 For every element $x\in K$, either $[h,{}_n x]=1$ for some $n$, or $[h,{}_nx]=[h,{}_{n+m}x]\ne 1$ for some positive integers $m,n$. When $m+n$ is minimal possible in this equation, the elements $[h,{}_ix]$ for $i=n,n+1,\dots , n+m-1$ form a cycle under the mapping $u\to [u,x]$ and this \emph{limit cycle} is contained in ${\mathscr R}_K(h)$. Clearly, ${\mathscr R}_K(h)$ is the union of limit cycles of elements of~$K$. A limit cycle may consist of a single element, but all elements of limit cycles are nontrivial by definition.  If $N$ is a normal subgroup, then a limit cycle is either entirely contained in $N$ or has empty intersection with $N$.

 \subsection*{\bf Automorphisms.} If $\varphi$ is an automorphism of a group $B$ we use the usual notation for commutators $[b,\varphi ]=b^{-1}b^\varphi$ and commutator subgroups $[B,\varphi ]=\langle [b,\varphi ]\mid b\in B\rangle$, as well as for centralizers $C_B(\varphi )=\{b\in B\mid b^\varphi=b\}$.   The automorphism induced by $\varphi  $ on the quotient by a normal $\varphi $-invariant subgroup is denoted by the same letter~$\varphi $.

 An automorphism $\varphi$ of a finite group $G$ is called a coprime automorphism if  the orders of $\varphi$ and $G$ are coprime: $(|G|,|\varphi|)=1$. A coprime automorphism of a $p$-group is called a $p'$-automorphism. The following lemma collects several well-known properties of coprime automorphisms of finite groups, which will be often used without special references.

\begin{lemma}\label{l-nakr}
Let $\varphi$ be a coprime automorphism of a finite group $G$.
\begin{itemize}
  \item[\rm (a)]  If $N$ is a normal $\varphi$-invariant subgroup of~$G$, then the fixed points of $\varphi$ in  the quotient $G/N$ are covered by the fixed points of $\varphi$ in $G$, that is, $C_{G/N}(\varphi) = C_G(\varphi)N/N$.

  \item[\rm (b)] We have $[[G, \varphi ],\varphi ]=[G,\varphi ]$.

  \item[\rm (c)]   If $G$ is abelian, then $G=[G,\varphi ]\times C_G(\varphi)$.
%    \item[\rm (d)]   If $G$ is a $p$-group, then $\varphi$ acts faithfully on the Frattini quotient $G/\Phi (G)$.????
\end{itemize}
  \end{lemma}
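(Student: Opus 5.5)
The plan is to prove the three parts of Lemma~\ref{l-nakr} in the order (a), (b), (c), deriving (b) and (c) from (a) together with the Schur--Zassenhaus theorem. Throughout, I work in the semidirect product $G\langle\varphi\rangle$, where $\langle\varphi\rangle$ is a complement to $G$ of order coprime to $|G|$.

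For (a), I fix $gN\in C_{G/N}(\varphi)$. The coset $gN$ is then $\varphi$-invariant, so $\langle\varphi\rangle$ acts on the subgroup $gN\langle\varphi\rangle\cap G N$; more concretely, I consider $M=N\langle\varphi\rangle$ and its conjugate $\langle\varphi\rangle^{g}$. Since $[g,\varphi]\in N$, the element $\varphi^{g}=\varphi[\varphi,g]$ lies in $N\langle\varphi\rangle$. Thus both $\langle\varphi\rangle$ and $\langle\varphi\rangle^g$ are complements to $N$ in $N\langle\varphi\rangle$. By the conjugacy part of Schur--Zassenhaus, there is $n\in N$ with $\langle\varphi\rangle^{g}=\langle\varphi\rangle^{n}$. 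Here solubility of one of $N$ or $\langle\varphi\rangle$ holds because $\langle\varphi\rangle$ is cyclic, so Feit--Thompson is not needed. Hence $gn^{-1}$ normalizes $\langle\varphi\rangle$, and therefore $[gn^{-1},\varphi]\in G\cap\langle\varphi\rangle=1$. So $gn^{-1}\in C_G(\varphi)$ and $gN=gn^{-1}N$.

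For (b), I put $H=[G,\varphi]$, which is normal in $G$: it is normalized by $G$ because $[xy,\varphi]=[x,\varphi]^y[y,\varphi]$. It is also $\varphi$-invariant. Write $K=[H,\varphi]$. This is normal in $G\langle\varphi\rangle$ as well, since it is normalized by $\varphi$ and, by the analogous identity and the fact that $H\trianglelefteq G$, also by $G$. In $\bar G=G/K$, the automorphism $\varphi$ centralizes $\bar H$. The key step is to show that $\varphi$ then centralizes $\bar G$. For $x\in G$, put $h=[x,\varphi]\in H$, so that $x^{\varphi}=xh$. Then $x^{\varphi^k}=x\,h^k$ modulo $K$, because $h$ is fixed modulo $K$. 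Taking $k=|\varphi|$ gives $\bar h^{|\varphi|}=1$. Since $|\bar h|$ divides $|G|$, which is coprime to $|\varphi|$, we get $\bar h=1$. Hence $H\le K$, and so $[[G,\varphi],\varphi]=[G,\varphi]$.

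For (c), with $G$ abelian, the map $x\mapsto[x,\varphi]$ is a homomorphism with kernel $C_G(\varphi)$ and image $[G,\varphi]$. Therefore $|[G,\varphi]||C_G(\varphi)|=|G|$, and it suffices to show $[G,\varphi]\cap C_G(\varphi)=1$. If $y=[x,\varphi]$ is fixed by $\varphi$, then the same telescoping computation as in (b) gives $y^{|\varphi|}=1$, so $y=1$ by coprimeness.

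The only genuinely nontrivial input is the conjugacy of complements in (a). I expect this to be the main point to check carefully, in particular that $\langle\varphi\rangle^g$ really lies in $N\langle\varphi\rangle$.
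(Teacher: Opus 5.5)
The paper gives no proof of this lemma; it quotes (a)--(c) as well-known facts about coprime automorphisms. Your arguments are the standard ones. For (a) you use conjugacy of the complements $\langle\varphi\rangle$ and $\langle\varphi\rangle^g$ of $N$ in $N\langle\varphi\rangle$; this is Schur--Zassenhaus with a cyclic complement, so Feit--Thompson is not needed. For (b) and (c) you observe that $x^{\varphi}=xh$ with $h$ fixed forces $h^{|\varphi|}=1$. Parts (a) and (c) are correct as written. The garbled sentence about $gN\langle\varphi\rangle\cap GN$ is meaningless, but the concrete argument after it is fine.

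There is one faulty step in (b): your claim that $K=[H,\varphi]$ is normal in $G$. The identity $[xy,\varphi]=[x,\varphi]^y[y,\varphi]$ with $x,y\in H$ only shows that $K$ is normalized by $H$. For $y\in G\setminus H$ you would need $xy\in H$, which fails.

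Normality of $K$ in $G$ is not automatic without coprimeness, so an argument that never uses $(|G|,|\varphi|)=1$ cannot prove it. Take $G=S_3\times S_3$ with $\varphi$ the swap of the factors. Then $[(x,y),\varphi]=(x^{-1}y,y^{-1}x)$, so $H=\{(x,y)\mid \mathrm{sgn}\,x=\mathrm{sgn}\,y\}$ and $K=\{(a,a^{-1})\mid a\in A_3\}$. For a transposition $t$, conjugating by $(t,1)$ sends $(a,a^{-1})$ to $(a^{-1},a^{-1})\notin K$ when $a\ne 1$.

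You do not actually need normality of $K$ in $G$. Since $K$ is $\varphi$-invariant and normal in $H$, your computation can be run entirely with cosets of $K$:
\begin{itemize}
\item From $h^{\varphi}\in hK$ you get $x^{\varphi^k}\in xh^kK$ for all $k$.
\item Taking $k=|\varphi|$ gives $h^{|\varphi|}\in K$.
\item The order of $hK$ in $H/K$ divides $|H|$, which is coprime to $|\varphi|$, so $h\in K$.
\end{itemize}

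Alternatively, you can get normality of $K$ in $G$ from (a), which is where coprimeness enters. Since $\varphi$ acts trivially on $G/H$, part (a) gives $G=C_G(\varphi)H$. Moreover $C_G(\varphi)$ normalizes $K$, because $[h,\varphi]^c=[h^c,\varphi]$ for $c\in C_G(\varphi)$.
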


An automorphism of order $p^k$ for some prime $p$ is called a $p$-automorphism.
The following fact is well-known.

\begin{lemma}
\label{l-pp}
If $\alpha $ is a $p$-automorphism of an elementary abelian $p$-group $V$, then $|V|$ is bounded above in terms of $|C_V(\alpha)|$ and $|\alpha|$.
\end{lemma}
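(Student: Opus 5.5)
We need to prove Lemma~\ref{l-pp}: if $\alpha$ is an automorphism of order $p^k$ of an elementary abelian $p$-group $V$, then $|V|$ is bounded in terms of $|C_V(\alpha)|$ and $|\alpha|=p^k$. The plan is to view $V$ as a module over $\mathbb F_p\langle\alpha\rangle$ and use the fact that $\alpha-1$ is nilpotent on $V$ with bounded nilpotency index.

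First, since $\alpha^{p^k}=1$, in characteristic $p$ we have
$$
(\alpha-1)^{p^k}=\alpha^{p^k}-1=0
$$
as linear transformations of $V$. Writing $V$ additively and setting $\delta=\alpha-1$, we get the filtration
$$
V\supseteq \delta V\supseteq \delta^2V\supseteq\dots\supseteq \delta^{p^k}V=0 .
$$
Equivalently, consider the kernels $V_i=\ker \delta^i$, which form an ascending chain $0=V_0\leqslant V_1\leqslant\dots\leqslant V_{p^k}=V$. Here $V_1=\ker\delta=C_V(\alpha)$.

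The key step is to show $|V_{i+1}/V_i|\leqslant |V_1|$ for each $i$. The map $\delta$ sends $V_{i+1}$ into $V_i$. The composite map $V_{i+1}\to V_i/V_{i-1}$ induced by $\delta$ has kernel exactly $V_i$, since $\delta v\in V_{i-1}$ if and only if $\delta^{i}v=0$. Hence $V_{i+1}/V_i$ embeds into $V_i/V_{i-1}$, and by induction each factor embeds into $V_1/V_0=C_V(\alpha)$. Therefore
$$
|V|=\prod_{i=0}^{p^k-1}|V_{i+1}/V_i|\leqslant |C_V(\alpha)|^{p^k},
$$
which is the required bound in terms of $|C_V(\alpha)|$ and $|\alpha|$.

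There is no real obstacle; the only point needing care is the identity $(\alpha-1)^{p^k}=0$, which follows from the binomial theorem in characteristic $p$, because the endomorphisms $\alpha$ and $1$ commute.
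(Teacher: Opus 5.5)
Your proof is correct and is essentially the paper's argument. The paper counts the Jordan blocks of the unipotent operator $\alpha$ (there are $\dim C_V(\alpha)$ of them, each of size at most $|\alpha|$), which gives the same bound $|V|\leqslant |C_V(\alpha)|^{|\alpha|}$. Your kernel filtration of $\alpha-1$, together with the identity $(\alpha-1)^{p^k}=0$, proves that block count directly without invoking Jordan normal form.
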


\begin{proof}
Consider  $V$ as an $\mathbb{F}_p\langle\alpha\rangle$-module. The dimension of  $|C_V(\alpha)|$ is equal to the number of Jordan blocks in the Jordan normal form of $\alpha$. Each of the blocks has size at most $|\alpha|$. Hence the result.
\end{proof}

Part (c) of the next elementary lemma was essentially proved in \cite[Lemma~2.5]{khu-shu25}.

\begin{lemma}\label{l3}
Suppose that a cyclic group $\langle \alpha \rangle$ acts by
automorphisms on a finite abelian group $V$.
\begin{itemize}

  \item[\rm (a)] The left sink $\mathscr{L}_V(\alpha)=\mathscr{L}_{V\langle\alpha\rangle}(\alpha)$ is a subgroup of $V$.

  \item[\rm (b)] For a power $\alpha^k$ of $\alpha$, we have $\mathscr{L}(\alpha^k)\subseteq \mathscr{L}(\alpha)$.

       \item[\rm (c)] If $V=[V,\alpha]$, then $V=\mathscr{L}(\alpha)$.
\end{itemize}
\end{lemma}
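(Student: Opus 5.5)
The plan is to exploit the fact that on an abelian group $V$ the map $\delta\colon v\mapsto [v,\alpha]=v^{-1}v^{\alpha}$ is an endomorphism of $V$. Since $[v,{}_n\alpha]=\delta^n(v)$, everything reduces to the standard behaviour of iterates of an endomorphism of a finite group.

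For (a), I would use the Fitting decomposition: because $V$ is finite, the chain $\delta(V)\supseteq\delta^2(V)\supseteq\cdots$ stabilizes at some $W=\delta^N(V)$. On $W$, $\delta$ restricts to a surjective, hence bijective, endomorphism. Every $v\in V$ satisfies $\delta^n(v)\in W$ for $n\geqslant N$, so any left sink may be taken inside $W$. Conversely, every element of $W$ is periodic under the permutation $\delta|_W$, so each nontrivial $w\in W$ lies on its own limit cycle and therefore belongs to $\mathscr{L}(\alpha)$. Thus $\mathscr{L}_V(\alpha)=W\setminus\{1\}$. Under the paper's convention that limit cycles consist of nontrivial elements, we then have $\mathscr{L}(\alpha)\cup\{1\}=W$, a subgroup, and "is a subgroup" is to be read with $1$ adjoined. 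For $x=v\alpha^j\in V\langle\alpha\rangle$ we have $[x,\alpha]=[v,\alpha]^{\alpha^j}\in V$, and conjugation by $\alpha^j$ commutes with $\delta$. So the commutators $[x,{}_n\alpha]$ land in the same $W$, which gives $\mathscr{L}_{V\langle\alpha\rangle}(\alpha)=\mathscr{L}_V(\alpha)$.

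For (b), set $\delta_k\colon v\mapsto v^{-1}v^{\alpha^k}$. Writing additively, $\alpha^k-1=(\alpha-1)(1+\alpha+\dots+\alpha^{k-1})$, and these factors commute in $\mathrm{End}(V)$. Hence $\delta_k^n(V)\subseteq\delta^n(V)$ for all $n$, so the stable image of $\delta_k$ is contained in $W$. By the description in (a), $\mathscr{L}(\alpha^k)\subseteq\mathscr{L}(\alpha)$. Both sinks are computed in $V$ (or equivalently in the semidirect product, by (a)).

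For (c), suppose $V=[V,\alpha]$. Since $V$ is abelian, $[V,\alpha]$ is generated by the image $\delta(V)$, and $\delta(V)$ is already a subgroup, so $V=\delta(V)$. Iterating gives $W=V$, and (a) yields $V=\mathscr{L}(\alpha)$ (again with $1$ adjoined). The only real obstacle is bookkeeping: the convention that $1$ is excluded from limit cycles, and the passage from $V$ to $V\langle\alpha\rangle$ in (a). Everything else follows directly from $\delta$ being an endomorphism.
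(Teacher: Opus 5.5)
Your proposal is correct and is essentially the paper's argument: both identify $\mathscr{L}_V(\alpha)$ as the image of a high power of the endomorphism $v\mapsto[v,\alpha]$ (the paper's set $\{[u,{}_{j_0}\alpha]\mid u\in V\}$); your factorization $\alpha^k-1=(\alpha-1)(1+\alpha+\dots+\alpha^{k-1})$ is the additive form of the paper's commutator expansion in (b), and in (c) you prove directly the fact that $v\mapsto[v,\alpha]$ is bijective, which the paper only cites. One bookkeeping correction: by the definition of a sink (take $x=1$), the element $1$ always lies in $\mathscr{L}_V(\alpha)$, and the paper's proof of (a) includes the elements $[u,{}_{j_0}\alpha]$ ``whether trivial or not'', so in fact $\mathscr{L}_V(\alpha)=W$ exactly with nothing to adjoin, and this is what (c) needs to hold as stated.
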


\begin{proof}
(a) For every element $x\in V$ there is a least positive integer $j(x)$ such that
$[x,{}_{j(x)}\alpha ]\in {\mathscr L}( \alpha)$. If $[x,{}_{j(x)}\alpha]\ne 1$, this means that
$[x,{}_{j(x)}\alpha ]$ belongs to a limit cycle in ${\mathscr L}( \alpha)$.
Then $1\ne [x,{}_j\alpha ]\in {\mathscr L}( \alpha)$ for all $j\geqslant j(x)$. Let $j_0=\max\{j(x)\mid x\in
V\}$.
If $1\ne [x,{}_j\alpha ]\in {\mathscr L}( \alpha)$, by adding several times the length of the
limit cycle containing $[x,{}_{j}\alpha ]$ we can assume that $j\geqslant j_0$. Then
$[x,{}_{j}\alpha ]=[[x,{}_{j-j_0}\alpha ],{}_{j_0}\alpha]$. Thus, all elements of ${\mathscr L}(
\alpha)$ have the form $[u,{}_{j_0}\alpha]$,  and all such elements (whether trivial or not) belong to ${\mathscr L}( \alpha)$.
Since $[u,{}_{j_0}\alpha]\cdot [v,{}_{j_0}\alpha]=[uv,{}_{j_0}\alpha]$ in the abelian group $V$, we see that ${\mathscr L}( \alpha)$ is a subgroup.

(b) We apply repeatedly the standard commutator formula $[x,yz]=[x,z][x,y][x,y,z]$. We have $[v,\alpha^k]=[v,\alpha]\cdot [v,\alpha^{k-1}]\cdot [[v,\alpha^{k-1}],\alpha]$, and so on, using the fact that $V$ is abelian. Then $[v,{}_j\alpha^k]$ is a product of commutators of the form $[v,{}_s\alpha]$ for $s\geqslant j$. For big enough $j$ all these commutators belong to ${\mathscr L}( \alpha)$, which is a subgroup by (a). Hence ${\mathscr L}( \alpha^k)\leqslant {\mathscr L}( \alpha)$.

(c) As shown in  \cite[Lemma~2.5(a)]{khu-shu25}], we have  $C_V(\alpha)=1$ and  the mapping $u\mapsto [u,\alpha]$ is an automorphism of $V$.  Thus every $v\in V$ is a commutator of the form $v=[u,\alpha]$, $u\in V$, an therefore of the form $[u,{}_k\alpha]$ for any $k$. Hence $V=\mathscr{L}(\alpha)$.
\end{proof}

Another lemma is from \cite{khu-shu19}.

\begin{lemma}[{\cite[Lemma~2.5]{khu-shu19}}]\label{l-metab}
 If $G$ is a metabelian group, then the minimal right Engel sink of $g\in G$ contains the minimal left Engel sink of the inverse $g^{-1}$, that is, ${\mathscr L}(g^{-1})\subseteq {\mathscr R}(g)$.
\end{lemma}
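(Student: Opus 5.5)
The plan is to show that for every $x\in G$ there is $y\in G$ such that $[x,{}_m g^{-1}]=[g,{}_m y]$ for all sufficiently large $m$. Then every limit cycle of $u\mapsto[u,g^{-1}]$ coming from $x$ is literally a limit cycle of $u\mapsto [u,y]$ coming from $g$. Since ${\mathscr L}(g^{-1})$ is the union of such limit cycles, and each limit cycle of the second kind lies in ${\mathscr R}(g)$, this gives ${\mathscr L}(g^{-1})\subseteq {\mathscr R}(g)$.

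First I reduce to the derived subgroup. Put $v=[x,g^{-1}]\in G'$, so that $[x,{}_{n+1}g^{-1}]=[v,{}_n g^{-1}]$. Since $G$ is metabelian, $A=G'$ is abelian, and I write it additively as a right $\mathbb Z G$-module. The action factors through $G/C_G(A)$, and $C_G(A)\supseteq A$. For $a\in A$ and $h\in G$ we have $[a,h]=a(h-1)$, so $[v,{}_n g^{-1}]=v(g^{-1}-1)^n$.

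Next I choose $y=g^{-1}c$ with $c\in A$ to be determined. Since $c$ acts trivially on $A$, the element $y$ acts on $A$ exactly as $g^{-1}$ does. By the commutator formula $[g,g^{-1}c]=[g,c][g,g^{-1}]^c=[g,c]$, and $[g,c]=-c(g-1)$ in additive notation. Hence, for $n\geqslant 0$,
$$[g,{}_{n+1}y]=-c(g-1)(g^{-1}-1)^n .$$
Using $g-1=-g(g^{-1}-1)$ in the group ring, this becomes
$$[g,{}_{n+1}y]=(cg)(g^{-1}-1)^{n+1}=[\,c^{g},{}_{n+1}g^{-1}].$$
Now choose $c\in A$ with $c^{g}=v$, that is, $c=v^{g^{-1}}$. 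This gives $[g,{}_{n+1}y]=[v,{}_{n+1}g^{-1}]=[x,{}_{n+2}g^{-1}]$ for all $n\geqslant 0$.

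The identity obtained is thus $[x,{}_{m+1}g^{-1}]=[g,{}_m y]$, an index shift by one. The two sequences therefore eventually run through the same cycle of values, and the nontrivial limit cycles coincide as sets, which is all that is needed.

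The main obstacle is the sign and side bookkeeping in the module identity, namely checking $(g-1)=-g(g^{-1}-1)$ and the convention $[a,h]=a^{-1}a^h$. I would verify these first. With that in place, the limit-cycle description of both sinks from \S\,\ref{s-prel} finishes the argument.
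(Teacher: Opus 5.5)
Your proof is correct and complete. The paper gives no proof of this lemma and simply quotes it from \cite[Lemma~2.5]{khu-shu19}, so there is no in-text argument to compare with.

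The bookkeeping you flagged as the main obstacle checks out. Everything rests on the identity $[g,c]=[c^{g},g^{-1}]$, which holds in any group, since both sides equal $g^{-1}c^{-1}gc$. The metabelian hypothesis is needed only to make $y=g^{-1}c$ act on $G'$ exactly as $g^{-1}$ does: for $u\in G'$ we have $[u,g^{-1}c]=[u,c][u,g^{-1}]^{c}=[u,g^{-1}]$. With $c=[x,g^{-1}]^{g^{-1}}$ this gives $[g,{}_m y]=[x,{}_{m+1}g^{-1}]$ for all $m\geqslant 1$, as you found.

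The last step can be stated without limit cycles, which also handles the trivial element automatically. For $k>r(y,g)$ we get $[x,{}_k g^{-1}]=[g,{}_{k-1}y]\in{\mathscr R}(g)$. The threshold may depend on $x$, so ${\mathscr R}(g)$ is itself a left Engel sink of $g^{-1}$. Since ${\mathscr L}(g^{-1})$ is the minimal such sink, it is contained in ${\mathscr R}(g)$.
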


We use this lemma to show that the result of  Guralnick and Tracey about the left Engel sink of an involution also yields the same kind of result for the right sink. More precisely, they proved in \cite[Theorem~1.7]{gur-tra} without using the classification that if $\tau\in \operatorname{Aut}G$ is an automorphism of order~2 of a finite group $G$ such that $G=[G,\tau]$, then $|G|$ is bounded above in terms of $|J_G(\tau)|$, where $J_G(\tau)=\{g\in G\mid g\text{ has odd order and }g^\tau=g^{-1}\}$.

\begin{corollary}[R. M. Guralnick and G. Tracey]
\label{c-gtright}
 If $\tau\in \operatorname{Aut}G$ is an automorphism of order~2 of a finite group $G$ such that $G=[G,\tau]$, then $|G|$ is bounded above in terms of $|{\mathscr L}_G(\tau)|$ and in terms of $|{\mathscr R}_{G\langle \tau\rangle}(\tau)|$.
\end{corollary}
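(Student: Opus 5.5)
We want to bound $|G|$ in terms of $|{\mathscr L}_G(\tau)|$ and, separately, in terms of $|{\mathscr R}_{G\langle\tau\rangle}(\tau)|$. The plan is to reduce both statements to the Guralnick--Tracey bound in terms of $|J_G(\tau)|$. The idea is that every element of $J_G(\tau)$ lies in both Engel sinks of $\tau$, up to the obvious bookkeeping, so $|J_G(\tau)|\le |{\mathscr L}_G(\tau)|$ and $|J_G(\tau)|\le|{\mathscr R}_{G\langle\tau\rangle}(\tau)|+1$. The theorem \cite[Theorem~1.7]{gur-tra} then finishes the proof.

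\emph{Left sink.} Let $g\in J_G(\tau)$ with $g\ne 1$. Then $[g,\tau]=g^{-1}g^\tau=g^{-2}$, and $g^{-2}$ is again inverted by $\tau$ and has odd order. By induction, $[g,{}_n\tau]=g^{(-2)^n}$. Since $g$ has odd order $k$, the map $x\mapsto x^{-2}$ permutes $\langle g\rangle$, and its nontrivial elements lie in cycles under $u\mapsto[u,\tau]$. In particular $g$ itself lies on a limit cycle, because $(-2)^n\equiv 1 \pmod k$ for some $n\ge 1$. Hence $g\in{\mathscr L}_G(\tau)$, so $J_G(\tau)\setminus\{1\}\subseteq{\mathscr L}_G(\tau)$, which gives $|J_G(\tau)|\le |{\mathscr L}_G(\tau)|+1$.

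\emph{Right sink.} For $g\in J_G(\tau)$, consider the subgroup $M=\langle g\rangle\langle\tau\rangle$ of $G\langle\tau\rangle$. It is dihedral, hence metabelian. Since $\tau^{-1}=\tau$, Lemma~\ref{l-metab} applied in $M$ gives
\[
{\mathscr L}_M(\tau)\subseteq{\mathscr R}_M(\tau)\subseteq {\mathscr R}_{G\langle\tau\rangle}(\tau).
\]
Here the second inclusion uses monotonicity of sinks with respect to subgroups containing $\tau$; this is the same argument as for $\tau$-invariant subgroups, since all commutators $[\tau,{}_nx]$ with $x\in M$ lie in $M$. By the previous paragraph applied inside $M$, the element $g$ (if nontrivial) lies in ${\mathscr L}_M(\tau)$. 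Therefore $J_G(\tau)\setminus\{1\}\subseteq{\mathscr R}_{G\langle\tau\rangle}(\tau)$.

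Alternatively, one can compute directly in $M$: $[\tau,g]=\tau g^{-1}\tau g=g^2$, so $[\tau,{}_n g]$ stabilizes at powers of $g$. This route would need care, so invoking Lemma~\ref{l-metab} is cleaner. The only delicate point is checking that the sink of $\tau$ taken inside the subgroup $M$ embeds into the sink taken in the whole group $G\langle\tau\rangle$. This holds because limit cycles computed in $M$ are genuine limit cycles in $G\langle\tau\rangle$.

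Combining both parts with \cite[Theorem~1.7]{gur-tra} gives the two bounds on $|G|$.
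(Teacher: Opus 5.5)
Your proof is correct and is essentially the paper's argument: for $g\in J_G(\tau)$ the map $u\mapsto[u,\tau]=u^{-2}$ permutes $\langle g\rangle$, so $\langle g\rangle\subseteq{\mathscr L}_{\langle g\rangle}(\tau)\subseteq{\mathscr L}_G(\tau)$. Then Lemma~\ref{l-metab} in $\langle g\rangle\langle\tau\rangle$ with $\tau^{-1}=\tau$ gives ${\mathscr L}_{\langle g\rangle\langle\tau\rangle}(\tau)\subseteq{\mathscr R}_{\langle g\rangle\langle\tau\rangle}(\tau)\subseteq{\mathscr R}_{G\langle\tau\rangle}(\tau)$, and \cite[Theorem~1.7]{gur-tra} finishes.

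One small correction to your aside on the direct route, which your proof does not depend on. Since $[\tau,g]=g^2$, one gets $[\tau,{}_ng]=1$ for $n\geqslant 2$. A direct computation would instead have to use $x=\tau g^i$, for which $[\tau,{}_n(\tau g^i)]=g^{2i(-2)^{n-1}}$.
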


\begin{proof}
 For $g\in J_G(\tau)$  we have $\langle g\rangle\subseteq  {\mathscr L}_{\langle g\rangle}(\tau)\subseteq {\mathscr L}_{G}(\tau)$. Hence $|G|$ is bounded above in terms of $|{\mathscr L}_{G}(\tau)|$ by \cite[Theorem~1.7]{gur-tra}. Applying Lemma~\ref{l-metab} to the semidirect product $\langle g\rangle\langle \tau\rangle$, we see that  ${\mathscr L}_{\langle g\rangle\langle \tau\rangle}(\tau)\subseteq {\mathscr R}_{\langle g\rangle\langle \tau\rangle}(\tau)\subseteq {\mathscr R}_{G\langle \tau\rangle}(\tau)$. Hence $|G|$ is bounded above in terms of $|{\mathscr R}_{G\langle \tau\rangle}(\tau)|$.
\end{proof}

Henceforth we write, say,  ``$(a,b,\dots)$-bounded'' to abbreviate ``bounded above by some function depending only on the parameters $a,b,\dots$".

\section{Right sink: simple groups}\label{s-rsimple}

In this section we prove Theorem~\ref{t-r} for simple groups in a stronger version given by Theorem~\ref{t-rsimple}.

\begin{proof}[Proof of Theorem~\ref{t-rsimple}] Recall that $\varphi$ is a nontrivial  automorphism of a non-abelian finite simple group~$G$. We need to prove that  $|G|$ is bounded above in terms of the cardinality of the right Engel sink ${\mathscr R}_G(\varphi)$. Note that $G=[G,\varphi ]$, since $G$ is simple.
  It is convenient to fix the number $m=|{\mathscr R}_G(\varphi)|$; our task is to prove that the order $|G|$ is $m$-bounded.
  We can obviously ignore the sporadic simple groups. To deal with the alternating groups and groups of Lie type, we shall make use of a `large' cyclic Sylow subgroup with the help of the following lemma.

  \begin{lemma}
    \label{l-cyc}
    If $S$ is a cyclic Sylow $s$-subgroup of $G$, then $|S|$ is $m$-bounded, namely, $|S|\leqslant (m-1)^2$.
  \end{lemma}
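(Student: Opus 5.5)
The plan is to fix a generator $x$ of $S$ and compare two situations: either $\varphi$ normalizes $S$, or it does not. In each case I want to exhibit many distinct nontrivial elements of limit cycles $[\varphi,{}_n y]$ with $y\in S$ or $y$ in a suitable conjugate of $S$. Two facts will be used throughout. First, $S$ is a TI-subgroup of $G$ by Blau's theorem \cite{bla}, so $S\cap S^g\in\{1,S\}$ for every $g$ in $G\langle\varphi\rangle$. Second, since $G$ is simple and $\varphi\ne1$, the automorphism $\varphi$ cannot centralize every conjugate of $S$; the conjugates of $S$ generate a nontrivial normal subgroup, hence all of $G$. So, after replacing $S$ by a conjugate, which changes neither $|S|$ nor ${\mathscr R}_G(\varphi)$, I may assume that $\varphi$ does not centralize $S$. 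The target bound $(m-1)^2$ suggests proving two inequalities, each of the form ``some section of $S$ has order at most $m-1$'', and multiplying them.

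\emph{Case 1: $\varphi$ normalizes $S$.} Then $S\langle\varphi\rangle$ is metabelian, because $S$ is cyclic and $\varphi$ acts on it as a power automorphism. By Lemma~\ref{l-metab},
\[
{\mathscr L}_{S\langle\varphi\rangle}(\varphi^{-1})\subseteq {\mathscr R}_{S\langle\varphi\rangle}(\varphi)\subseteq {\mathscr R}_G(\varphi)\cup\{1\}.
\]
Next apply Lemma~\ref{l3}: we have $[[S,\varphi],\varphi]=[S,\varphi]$ on the cyclic $s$-group $S$ (if necessary after passing to the $s'$-part of $\langle\varphi\rangle$ via Lemma~\ref{l3}(b)), so Lemma~\ref{l3}(c) shows that $[S,\varphi]$ lies in the left sink of $\varphi^{-1}$. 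This gives $|[S,\varphi]|\le m+1$. It remains to bound $|C_S(\varphi)|$, and this is the factor I would try to get from a second conjugate $S^g$ that is \emph{not} normalized by $\varphi$, which brings in Case~2. Note that if $\varphi$ normalized every conjugate of $S$, it would normalize $\langle S^G\rangle=G$ in a way forcing a nontrivial $\varphi$-invariant proper structure, which I expect to rule out directly.

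\emph{Case 2: $\varphi$ does not normalize $S$.} Then $S\cap S^{\varphi}=1$ by the TI property. For $y\in S$ we have $[\varphi,y]=(y^{-1})^{\varphi}y$. The key computation is to show two things:
\begin{itemize}
\item[(i)] the commutators $[\varphi,{}_n y]$ never become trivial, using $S\cap S^\varphi=1$ to exclude collapse;
\item[(ii)] limit cycles for $y$ ranging over suitable coset representatives (for example over $S$ modulo a subgroup of order about $\sqrt{|S|}$) are pairwise disjoint.
\end{itemize}
For (ii) the injectivity argument is that $\varphi^{y}=\varphi^{z}$ forces $yz^{-1}\in C_S(\varphi)\subseteq S\cap S^{\varphi}=1$, and this has to be propagated to the later terms of the sequence. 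Disjointness would yield at least $\sqrt{|S|}$ distinct sink elements, that is, $|S|\le(m-1)^2$.

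The main obstacle is Case~2, step (ii): controlling the later terms $[\varphi,{}_n y]$, which are not simply $\varphi^{-1}\varphi^{y}$, and proving that distinct $y$ cannot share a limit cycle. I would first try to reduce to the dihedral-like subgroup $\langle S, S^{\varphi}\rangle$ and write the terms explicitly there as words in $y$ and $y^{\varphi}$. If that fails, I would fall back to bounding separately the number of elements in a single cycle and the number of distinct cycles, each by $m-1$, which would again give the product bound $(m-1)^2$.
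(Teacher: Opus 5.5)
\textbf{There is a genuine gap.} The proof rests entirely on your Case~2, and both of its steps are missing.

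Step (i) does not follow from $S\cap S^{\varphi}=1$. That fact only gives $[\varphi,y]\ne 1$ and says nothing about later terms.

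Step (ii), that limit cycles for distinct $y$ are pairwise disjoint, is unproved. Nothing prevents the cycles of two different elements of $S$ from meeting. The restriction to roughly $\sqrt{|S|}$ coset representatives is read off from the target bound rather than derived from anything.

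Your fallback does not help either. Bounding cycle lengths and the number of cycles by $m-1$ gives no injection from $S$ into anything, so it cannot bound $|S|$. The bound $(m-1)^2$ is not a product of two section bounds, as you guessed. It is $|U|^2$, a count of ordered pairs in the sink.

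The missing idea is an invariant that survives every iteration: for $1\ne x\in S$ one has $[\varphi,{}_kx]\notin N_G(S)$ for all $k\ge 0$. To see this, suppose $y=[\varphi,{}_{k-1}x]\notin N_G(S)$ but $[y,x]=(x^{-1})^{y}x\in N_G(S)$. Then $(x^{-1})^{y}$ is an $s$-element of $N_G(S)$, hence lies in $S$. So $S\cap S^{y^{-1}}\ne 1$, and the TI property forces $y\in N_G(S)$, a contradiction.

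This invariant gives nontriviality of all terms. More importantly, every element $u$ of the resulting limit cycles satisfies $C_S(u)\le S\cap S^{u}=1$. Hence $x\mapsto[u,x]$ is injective on $S$, because $[u,x_1]=[u,x_2]$ means $x_2x_1^{-1}$ commutes with $u$.

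Now let $U\subseteq{\mathscr R}_G(\varphi)\setminus\{1\}$ be the union of these cycles. Each $x\in S\setminus\{1\}$ yields some pair $(u,[u,x])\in U\times U$, and that pair determines $x$. So $|S\setminus\{1\}|\le|U|^2\le(m-1)^2$, with no disjointness of cycles needed at all.

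Your Case~1 is superfluous, and as written it is incomplete. It bounds only $[S,\varphi]$ and defers $C_S(\varphi)$ to Case~2. It also leaves the existence of a conjugate of $S$ not normalized by $\varphi$ as something you expect to rule out directly. Your preliminary reduction also uses the wrong condition: you need $\varphi$ not to \emph{normalize} $S$, not merely not to centralize it.

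That reduction takes one line. If $\varphi$ normalized every Sylow $s$-subgroup, it would lie in the kernel of the conjugation action of $G\langle\varphi\rangle$ on them. This kernel is normal, so it contains $[G,\varphi]=G$, which would make $S$ normal in the simple group $G$. So you may assume $S^{\varphi}\ne S$ from the outset. Lemma~\ref{l-metab} and Lemma~\ref{l3}, along with the coprimality issues they raise for an $s$-group, play no role in this lemma.
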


  \begin{proof}
    We shall be using the fact  that a cyclic Sylow subgroup of a finite simple group is a TI-subgroup proved by Blau \cite{bla},  so that $S\cap S^g=1$ if $g\not\in N_G(S)$.

    We can choose $S$ to be not $\varphi$-invariant. Indeed, if $\varphi$ normalized all Sylow $s$-subgroups of~$G$, then $\varphi$ would belong to the kernel of the permutational action of $G\langle\varphi\rangle$ by conjugation on the set of Sylow $s$-subgroups, and then $G=[G,\varphi ]$ would also normalize all Sylow $s$-subgroups, a contradiction.

    Let $x$ be an arbitrary nontrivial element of $S$. We claim that $[\varphi ,{}_kx]\not\in N_G(S)$ for all $k=0,1,2,\dots$. Indeed, first note that $[\varphi ,{}_0x]:= \varphi\not\in N_G(S)$ by our choice of $S$. If there is some $[\varphi ,{}_kx]\in N_G(S)$, we choose this $k$ to be minimal possible with this property. Then $y:=[\varphi, {}_{k-1}x]\not\in N_G(S)$. On the other hand, $[y,x]\in     N_G(S)$. Given that $[y,x]=(x^{-1})^yx$ and $x\in S\leqslant N_G(S)$, we obtain that $(x^{-1})^y\in N_G(S)$. Since $(x^{-1})^y$ is an $s$-element and $S$ is a Sylow $s$-subgroup, we must have $(x^{-1})^y\in S$. As a result, $x^{-1}\in S^{y^{-1}}\cap S$, whence  $S^{y^{-1}}=S$ because $S$ is a TI-subgroup. But then $y\in N_G(S)$, a contradiction.

    Since $[\varphi ,{}_kx]\not\in N_G(S)$ for $x\in S\setminus\{1\}$ and for any $k$, in particular,  the elements $[\varphi, {}_kx]$ are all nontrivial. Therefore for some $k(x)$, all the elements $[\varphi ,{}_kx]$ for $k\geqslant k(x)$ belong to ${\mathscr R}_G(\varphi)$ forming a limit  cycle under the mapping $u\to [u,x]$, and every element in the cycle does not normalize $S$. Let $U$ be the union of these cycles over all $x\in S\setminus\{1\}$. It follows that for every  $x\in S\setminus\{1\}$  there is at least one pair of elements $u,v\in U$ such that $[u,x]=v$ (here the case $u=v$ is also possible). Since $U\subseteq {\mathscr R}_G(\varphi)\setminus \{1\}$, we have $|U|\leqslant m-1$, so that there are at most $(m-1)^2$ such pairs. Therefore, if $|S|>(m-1)^2$, then there are two different elements $x_1,x_2\in S$ for the same pair $u,v\in U$ such that $[u,x_1]=v$ and  $[u,x_2]=v$. Then  $[u,x_1]=[u,x_2]$,  whence $[u,x_1x_2^{-1}]=1$. Since $x_1x_2^{-1}$ is a nontrivial element of $S$, which is a TI-subgroup, $u$ must normalize $S$, contrary to the above. Therefore $|S|\leqslant (m-1)^2$.
      \end{proof}

  \begin{lemma}
    \label{l-ralt}
    If $G$ is isomorphic to an alternating group, then $|G|$ is $m$-bounded.
  \end{lemma}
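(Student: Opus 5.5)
We reduce to Lemma~\ref{l-cyc}: for $G\cong A_n$ it suffices to exhibit a cyclic Sylow $s$-subgroup of $G$ whose order grows with $n$. Then the bound $|S|\leqslant (m-1)^2$ forces $n$, and hence $|G|=n!/2$, to be $m$-bounded.

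\textbf{Choice of prime.} For $n\geqslant 5$, take a prime $s$ with $n/2<s\leqslant n$; one exists by Bertrand's postulate. Since $2s>n$, the $s$-part of $n!/2$ is exactly $s$, because $s$ is odd and $s^2\nmid n!$. So a Sylow $s$-subgroup $S$ of $A_n$ has order $s$, is generated by an $s$-cycle, and is cyclic.

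\textbf{Applying the lemma.} Lemma~\ref{l-cyc} gives $s\leqslant (m-1)^2$. Since $s>n/2$, we get $n<2(m-1)^2$, so $|G|$ is $m$-bounded. Small values such as $n\leqslant 4$ are irrelevant: $A_n$ is not non-abelian simple there, and finitely many small cases are bounded anyway.

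\textbf{Exceptional automorphisms.} The only potential subtlety is whether $\varphi$ could be an exceptional automorphism, for instance for $A_6$. This does not matter, because Lemma~\ref{l-cyc} applies to any nontrivial automorphism $\varphi$ of the simple group $G$. Its proof uses only Blau's TI property of cyclic Sylow subgroups and the existence of a non-$\varphi$-invariant Sylow $s$-subgroup. So there is no real obstacle, and the argument is just Bertrand's postulate combined with the lemma.
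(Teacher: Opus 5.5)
Your proof is correct and is essentially the paper's argument: Bertrand's postulate gives a prime $s$ with $n/2<s\leqslant n$, so the Sylow $s$-subgroup of $A_n$ is cyclic of order $s$. Lemma~\ref{l-cyc} then yields $s\leqslant (m-1)^2$, hence $n<2(m-1)^2$; your extra remarks on $s$ being odd, on small $n$, and on exceptional automorphisms are harmless details that the paper leaves implicit.
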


\begin{proof}
Let $G\cong A_n$; we need to show that $n$ is $m$-bounded. By Bertrand’s Postulate, which was proved by Chebyshev \cite{che}, there is a prime number $p$ satisfying $n/2<p\leqslant n$. Then the Sylow $p$-subgroup of $A_n$ is cyclic of order $p$. By Lemma~\ref{l-cyc} we have $p\leqslant (m-1)^2$, whence $n<2(m-1)^2$.
\end{proof}

It remains to consider simple groups of Lie type. From now on in this section we assume that $G\cong L_d(q)$ is a group of Lie type $L$ of Lie rank $d$ over the finite field $\mathbb{F} _q$ of characteristic~$p$.  The order of $G$ is bounded in terms of the Lie rank $d$ and the cardinality of the field $q=p^k$. First we shall show that the Lie rank is $m$-bounded, then that $p$ is $m$-bounded, and finally that $k$ is $m$-bounded.

To show that the Lie rank $d$ is $m$-bounded, we shall use the fact that every finite simple group contains a cyclic Sylow $s$-subgroup for some prime $s$. This was noted by Brauer \cite{bra} and is implicit in the classification, as well as was explicitly proved by Hiss \cite{his}. Recall that the order of a cyclic Sylow $s$-subgroup in our group $G$ is $m$-bounded by Lemma~\ref{l-cyc}.

\begin{lemma}
  \label{l-lierank}
 If $G\cong L_d(q)$ is a group of Lie type of rank $d$, then $d$ is $m$-bounded, namely, $d\leqslant \max\{8, 2(m-1)^2\}$.
\end{lemma}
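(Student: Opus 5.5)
Since Lemma~\ref{l-cyc} bounds the order of any cyclic Sylow subgroup of $G$ by $(m-1)^2$, it suffices to find, for each group of Lie type $L_d(q)$ of large rank $d$, a prime $s\geqslant c\,d$ (with $c$ an absolute constant, ideally $c=1/2$) whose Sylow $s$-subgroup in $G$ is cyclic. Then $d/2\leqslant s\leqslant |S|\leqslant (m-1)^2$, which gives $d\leqslant 2(m-1)^2$. The exceptional groups all have rank at most~$8$, so they account for the term $8$ in the bound, and we may assume that $G$ is a classical group of rank $d>8$: of type $A_d$, ${}^2A_d$, $B_d$, $C_d$, $D_d$ or ${}^2D_d$.

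The natural candidates are Zsigmondy (primitive prime) divisors. For the relevant exponent $e$, take a primitive prime divisor $r$ of $q^e-1$, so that $r\equiv 1\pmod e$ and hence $r\geqslant e+1$. Such a prime exists except in the Zsigmondy exceptions $(q,e)=(2,6)$ and $e=2$ with $q+1$ a power of $2$, and these do not occur for the large $e$ used here. The order formula of $G$ is a product of a power of $p$ and factors $q^i-1$ or $q^i-(-1)^i$. We choose $e$ so that a factor divisible by $\Phi_e(q)$ occurs exactly once and no other factor $q^i\pm1$ with $i\leqslant$ (the relevant bound) is divisible by $r$. Then the Sylow $r$-subgroup lies in a cyclic maximal torus and is therefore cyclic. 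This is precisely the content of Guralnick--Shareshian--Woodroofe \cite{gsw}, which we would cite directly.

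Concretely: for $A_{d}=L_{d+1}(q)$, take $e=d+1$ or $e=d$, whichever gives a single occurrence; for $B_d,C_d$, take $e=2d$ (the Singer-type torus of order $q^d+1$); for $D_d$, take $e=2d-2$; for ${}^2D_d$, take $e=2d$; for ${}^2A_d$, take $e$ roughly $2(d+1)$ or $d+1$ depending on parity, working with $\Phi_e(-q)$. In every case we need $e>(\text{largest index})/2$, so that $\Phi_e(q)$ divides only one factor in the order formula. This gives $s\geqslant e+1>d/2$ uniformly, and hence $d\leqslant 2(m-1)^2$.

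The main obstacle is verifying cyclicity rather than mere existence of $s$. One has to check carefully that the Sylow $s$-subgroup is cyclic, which requires $r$ not to divide $|W|$ (here $r>d$ suffices, or $r$ at least large) and $\Phi_e$ to appear with multiplicity one. This is where \cite{gsw} is invoked instead of redoing the case analysis.
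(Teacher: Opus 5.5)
Your argument is correct in outline, but it takes a different route from the paper. The paper never identifies a particular prime. It takes \emph{any} prime $s$ for which $G$ has a nontrivial cyclic Sylow $s$-subgroup (such $s$ exists by Brauer/Hiss), and Lemma~\ref{l-cyc} gives $s\leqslant (m-1)^2$. It then uses that the Weyl group, and hence $A_d$, is a section of $G$. Once $d\geqslant 2s$, $A_d$ has a noncyclic Sylow $s$-subgroup (two disjoint $s$-cycles, or a Klein four-group when $s=2$). Since Sylow subgroups of sections are sections of Sylow subgroups, this forces $d<2s$. That argument is uniform, needs no order formulas, and is where the factor $2$ in the bound comes from.

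You instead build the cyclic Sylow subgroup explicitly from a primitive prime divisor. This is exactly the mechanism the paper uses later, in Lemma~\ref{l-exp}, to bound $k$. Your choices of $e$ do give multiplicity one, and in every classical type they even give $r>d$:
\begin{itemize}
\item[] $r\equiv 1\pmod{d+1}$ for $A_d$, and for ${}^2A_d$ with $r\mid\Phi_{d+1}(-q)$;
\item[] $r\geqslant 2d+1$ for $B_d$, $C_d$ and ${}^2D_d$;
\item[] $r\geqslant 2d-1$ for $D_d$.
\end{itemize}
So your route yields the sharper bound $d<(m-1)^2$. It could also be merged with Lemma~\ref{l-exp}: a primitive divisor of $p^{ke}-1$ bounds $ke$, and therefore bounds $k$ and $d$ at once. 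The price is a case-by-case check of order formulas. The paper avoids that check by outsourcing the existence of a cyclic Sylow subgroup to Hiss, at the cost of not knowing how large $s$ is.

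One correction to how you use the literature. \cite{gsw} supplies a Zsigmondy prime that does not divide the order of any proper parabolic subgroup. It does not assert that the Sylow $r$-subgroup is cyclic, so you must supply that step yourself. You can do it by making your torus argument explicit: a Singer cycle in type $A_d$, a Coxeter torus of order $q^d+1$ in types $B_d$, $C_d$, ${}^2D_d$, and the cyclic factor of order $q^{d-1}+1$ in type $D_d$. Alternatively, use the argument in the proof of Lemma~\ref{l-exp}: a noncyclic Sylow $r$-subgroup with $r$ odd contains $C_r\times C_r$, and this lies in a proper parabolic subgroup.
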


\begin{proof}
  The Weyl groups are sections of groups of Lie type. It is known that the Weyl groups of classical Chevalley and Steinberg groups of Lie rank $d$ involve alternating groups $A_d$.  Let $s$ be a prime such that $G$ has a nontrivial cyclic Sylow $s$-subgroup. Then $s\leqslant (m-1)^2$ by Lemma~\ref{l-cyc}. But if $d\geqslant 2s$, then $A_d$ has a non-cyclic Sylow $s$-subgroup. Hence $d<2s\leqslant 2(m-1)^2$. The Lie ranks of all other exceptional groups of Lie type are at most 8.
\end{proof}

Our next goal is to prove that $p$ is $m$-bounded. The proof of this fact is similar to the proof of Lemma~\ref{l-cyc}, with the role of a cyclic Sylow subgroup taken by a subgroup generated by a regular unipotent element. First we recall some definitions and basic facts. A finite simple group $G$ is a group of Lie type in characteristic $p$
if there exist a simple algebraic group $K$ over the algebraically closed
field $\overline{F}_p$, and an endomorphism $\sigma$  of $K$ such that $G$ is generated by the elements of order $p$ in the fixed-point group $K^{\sigma}$. Every such $K$ contains a regular unipotent element, which can be defined as having the least possible dimension of the centralizer (see \cite[Ch.\,4]{hum}). A regular unipotent element is contained in a unique Borel subgroup, and therefore in a unique maximal connected unipotent subgroup. The group $K^{\sigma}$ contains a regular unipotent element (see \cite[\S\,8.12]{hum}), which is also contained in $G$. It follows that  a regular unipotent element in $G$ is contained in a unique Sylow $p$-subgroup of~$G$.

 \begin{lemma}\label{l-rp}
   If $G$ is a simple group of Lie type over a field of characteristic $p$, then $p$ is $m$-bounded, namely, $p\leqslant (m-1)^2+1$.
 \end{lemma}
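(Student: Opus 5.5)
We imitate the proof of Lemma~\ref{l-cyc}, with the cyclic TI-subgroup $S$ replaced by the cyclic subgroup $\langle x\rangle$ of order $p$ generated by a regular unipotent element $x\in G$. The idea is to use that $x$ lies in a unique Sylow $p$-subgroup, which serves as a substitute for the TI-property. Let $P$ be the unique Sylow $p$-subgroup of $G$ containing $x$. Every nontrivial power $x^i$ ($1\leqslant i\leqslant p-1$) is again regular unipotent. Indeed, in $K$ the element $x^i$ lies in the same maximal connected unipotent subgroup, and its centralizer dimension is still minimal, because for $p$ not dividing $i$ we have $x\in\langle x^i\rangle$ and so $C(x^i)=C(x)$. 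Hence each such $x^i$ is contained in $P$ and in no other Sylow $p$-subgroup.

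First we choose $x$ so that $\varphi$ does not normalize $P$. If $\varphi$ normalized every Sylow $p$-subgroup of $G$ that contains a regular unipotent element, then, since these form a single $G$-conjugacy class, the argument of Lemma~\ref{l-cyc} applies: $\varphi$ lies in the kernel of the action of $G\langle\varphi\rangle$ on this conjugacy class. Then $G=[G,\varphi]$ normalizes all these subgroups, which is impossible because $G$ is simple and the Sylow $p$-subgroup is not normal. Replacing $x$ by a conjugate, we may therefore assume $\varphi\notin N_{G\langle\varphi\rangle}(P)$, and we write $N=N_{G\langle\varphi\rangle}(P)$.

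Next we show that $[\varphi,{}_k x^i]\notin N$ for all $k\geqslant0$ and $1\leqslant i\leqslant p-1$. Suppose not, take $k$ minimal, and set $y=[\varphi,{}_{k-1}x^i]\notin N$. Since $[y,x^i]\in N$ and $x^i\in P\leqslant N$, we get $(x^{-i})^y\in N$. This is a $p$-element of $N_G(P)$, so it lies in $P$. Thus $x^{-i}\in P\cap P^{y^{-1}}$. Because $x^{-i}$ is regular unipotent, uniqueness of the Sylow subgroup gives $P^{y^{-1}}=P$, contradicting $y\notin N$. In particular all these commutators are nontrivial, so each $x^i$ contributes a limit cycle in ${\mathscr R}_G(\varphi)\setminus\{1\}$ consisting of elements outside $N$. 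Let $U$ be the union of these cycles; then $|U|\leqslant m-1$.

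Finally we count. For each of the $p-1$ elements $x^i$ there is a pair $u,v\in U$ with $[u,x^i]=v$. If $p-1>(m-1)^2$, two distinct powers $x^i\ne x^j$ share a pair, so $[u,x^{i-j}]=1$. Then $x^{i-j}$ is a nontrivial, hence regular unipotent, element of $P\cap P^{u^{-1}}$, which forces $u\in N$, a contradiction. Therefore $p\leqslant (m-1)^2+1$. The main obstacle is the choice step: justifying that all nontrivial powers of a regular unipotent element are regular unipotent, and that a regular unipotent element lies in a unique Sylow $p$-subgroup of $G$. Both facts are taken from the preliminary discussion and \cite{hum}.
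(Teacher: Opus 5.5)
Your argument is correct and is essentially the paper's proof: you choose a Sylow $p$-subgroup $P$ not normalized by $\varphi$, you replace the TI-property by the uniqueness of the Sylow subgroup containing a regular unipotent element, and you use the same pigeonhole count, with your powers $x,x^2,\dots,x^{p-1}$ playing the role of the paper's generators of $\langle g\rangle$ taken from the $p-1$ distinct cosets of $\langle g^p\rangle$. The only inaccuracy is your claim that $\langle x\rangle$ has order $p$: a regular unipotent element can have order $p^a>p$ (e.g.\ a single Jordan block in $SL_n(q)$ with $p<n$), so ``nontrivial, hence regular unipotent'' should instead read ``$0<|i-j|<p$, hence $x^{i-j}$ generates $\langle x\rangle$ and so lies in no Sylow $p$-subgroup other than $P$'', and with that change your proof goes through as written.
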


\begin{proof}
Let $P$ be a Sylow $p$-subgroup of $G$, and $g$ a regular unipotent element of $G$ contained in $P$. We can choose $P$ to be not $\varphi$-invariant. Indeed, if $\varphi$ normalized all Sylow $p$-subgroups of~$G$, then $\varphi$ would belong to the kernel of the permutational action of $G\langle\varphi\rangle$ by conjugation on the set of Sylow $p$-subgroups, and then $G=[G,\varphi ]$ would also normalize all Sylow $p$-subgroups, a contradiction.

Let $x$ be an arbitrary generator of $\langle g\rangle$. We claim that $[\varphi ,{}_kx]\not\in N_G(P)$ for all $k=0,1,2,\dots$. First note that $[\varphi ,{}_0x]:= \varphi\not\in N_G(P)$ by our choice of $P$. If there is some $[\varphi ,{}_kx]\in N_G(P)$, we choose  $k$ to be minimal possible with this property. Then $y:=[\varphi, {}_{k-1}x]\not\in N_G(P)$. On the other hand, $[y,x]\in  N_G(P)$. Given that $[y,x]=(x^{-1})^yx$ and $x\in P\leqslant N_G(P)$, we obtain that $(x^{-1})^y\in N_G(P)$. Since $(x^{-1})^y$ is a $p$-element and $P$ is a Sylow $p$-subgroup, we must have $(x^{-1})^y\in P$. Then $x^{-1}\in P^{y^{-1}}$, so that $g\in P^{y^{-1}}$. Hence  $P^{y^{-1}}=P$ because $P$ is a unique Sylow $p$-subgroup containing $g$. But then $y\in N_G(S)$, a contradiction.

Since $[\varphi ,{}_kx]\not\in N_G(P)$ for any  $k$, in particular,  the elements $[\varphi, {}_kx]$ are all nontrivial. Therefore for some $k(x)$, all the elements $[\varphi ,{}_kx]$ for $k\geqslant k(x)$ belong to ${\mathscr R}_G(\varphi)$ forming a limit cycle under the mapping $u\to [u,x]$, and every element in the cycle does not normalize~$P$. Let $U$ be the union of these cycles over all  generators of $\langle g\rangle$. It follows that for every  generator $x$ of $\langle g\rangle$ there is at least one pair of elements $u,v\in U$ such that $[u,x]=v$ (here the case $u=v$ is also possible). Since $U\subseteq {\mathscr R}_G(\varphi)\setminus\{1\}$, we have $|U|\leqslant m-1$, so that there are at most $(m-1)^2$ such pairs.
If $p-1>(m-1)^2$, then there are two generators $x_1,x_2$ of $\langle g\rangle$ in different cosets of $\langle g^p\rangle$ in $\langle g\rangle$ for the same pair $u,v\in U$ such that $[u,x_1]=v$ and  $[u,x_2]=v$. Then  $[u,x_1]=[u,x_2]$,  whence $[u,x_1x_2^{-1}]=1$. Since $x_1x_2^{-1}$ is also a   generator of $\langle g\rangle$, we  have $g\in \langle x_1x_2^{-1}\rangle=\langle x_1x_2^{-1}\rangle^u\leqslant P^u$. Hence $P^u=P$,  because $P$ is a unique Sylow $p$-subgroup containing $g$. Then $u\in N_G(P)$, a contradiction. Therefore $p-1\leqslant (m-1)^2$.
  \end{proof}

Recall that now $G\cong L_d(q)$ is a group of Lie type $L$ of Lie rank $d$ over a finite field $\mathbb{F} _q$ of characteristic $p$  with $q=p^k$. It remains to prove that $k$ is $m$-bounded. For that we shall use a theorem of Guralnick, Shareshian, and Woodroofe \cite{gsw} that guarantees the existence of a prime divisor of $|G|$ that does not divide the order of any proper parabolic subgroup. In most cases, this is achieved in \cite{gsw} with the use of so-called Zsigmondy primes. Our arguments are  similar to the proof of Proposition~5.5 in \cite{dgms}. First we recall the definitions and relevant results.

Given positive integers $q$ and $e$, a Zsigmondy prime for the pair $(q, e)$ is a prime $r$ dividing $q^e -1$ but not dividing $q^f - 1$ all positive integers $f<e$.
By Zsigmondy's theorem \cite{zsi}, a~Zsigmondy prime for $(q, e)$ exists unless
$e = 6$ and $q = 2$, or $e = 2$ and $q = 2^k - 1$ for some integer $k$. Table~2 in \cite{gsw} lists finite simple groups $G$ of Lie type over a field $\mathbb{F}_q$ for which there is an exponent $e$ such that  a Zsigmondy prime $r$ for the pair $(q,e)$ does not divide the order of any proper parabolic subgroup of $G$. The only groups of Lie type not covered by this table are, in the notation of \cite{gls},   $A_1(p)$ for some Mersenne prime $p$,  or one of the groups $G_2(2)$, $A^+_5(2)$, $A^-_2 (2)$, $A^-_3 (2)$, $B_3(2)$, $C_3(2)$, or $D^+_
4 (2)$. Clearly, these exceptions are irrelevant for our purpose of bounding $k$ in   $q=p^k$, which is the size of the definition field of our group $G$.

\begin{lemma}\label{l-exp}
 If $G$ is a simple group of Lie type over a field $\mathbb{F}_q$ of characteristic $p$ with $q=p^k$, then $k$ is $m$-bounded, namely, $k\leqslant \max\{4,(m-1)^2-1\}$. \end{lemma}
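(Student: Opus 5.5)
The plan is to apply Lemma~\ref{l-cyc} to the Sylow $r$-subgroup for a suitable Zsigmondy prime $r$, and then use the elementary fact that a Zsigmondy prime is large compared with its exponent. First I discard the exceptions not covered by Table~2 of \cite{gsw}. These are $A_1(p)$ with $p$ a Mersenne prime, and the listed groups over $\mathbb{F}_2$. All of them have $k=1$, so the bound holds trivially.

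For every other $G$, Table~2 of \cite{gsw} supplies an exponent $e$ such that a Zsigmondy prime for $(q,e)$ divides $|G|$ but not the order of any proper parabolic subgroup. I would then use the results of \cite{gsw} (as already indicated in the introduction) to see that the Sylow $r$-subgroup of $G$ is cyclic for such $r$. This is the point where the specific information in \cite{gsw} is needed, and I expect it to be the main obstacle: one must check that the cyclicity statement there applies to the prime I actually choose.

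The prime I want is not just any Zsigmondy prime for $(q,e)$, but a primitive prime divisor $r$ of $p^{ke}-1$, that is, a Zsigmondy prime for the pair $(p,ke)$. Such an $r$ exists by Zsigmondy's theorem unless $p=2$ and $ke=6$, or $ke=2$ and $p$ is a Mersenne prime. An $r$ of this kind is automatically a Zsigmondy prime for $(q,e)$. Indeed, $r$ divides $q^e-1=p^{ke}-1$. If $r$ divided $q^f-1=p^{kf}-1$ for some $f<e$, this would contradict primitivity, since $kf<ke$. Hence the argument of \cite{gsw} (as in \cite[Proposition~5.5]{dgms}) applies to this $r$, and the Sylow $r$-subgroup is nontrivial and cyclic.

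Since the multiplicative order of $p$ modulo $r$ is exactly $ke$, we get $ke \mid r-1$, so $r\geqslant ke+1\geqslant k+1$. On the other hand, Lemma~\ref{l-cyc} gives $r\leqslant |S|\leqslant (m-1)^2$. Therefore $k\leqslant (m-1)^2-1$.

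It remains to treat the Zsigmondy exceptions for $(p,ke)$. If $ke=2$, then either $k=1$ or $k=2$, $e=1$. If $p=2$ and $ke=6$, then $k\in\{1,2,3,6\}$. To exclude $k=6$ (and so obtain $k\leqslant 4$), I would check in Table~2 that $e\geqslant 2$ always; this should follow because $r\mid q-1$ divides the order of a torus in a Borel subgroup, which is a proper parabolic. This yields $k\leqslant\max\{4,(m-1)^2-1\}$.
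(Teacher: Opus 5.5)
Your outline is the paper's. A Zsigmondy prime $r$ for $(p,ke)$ is one for $(q,e)$, so by \cite[Table~2]{gsw} it divides $|G|$ but not the order of any proper parabolic subgroup. Lemma~\ref{l-cyc} then gives $r\leqslant (m-1)^2$, and $ke\mid r-1$ yields $k\leqslant (m-1)^2-1$. Handling the Zsigmondy exceptions afterwards via $e\geqslant 2$ is equivalent to the paper's up-front reduction to $k\geqslant 4$. The paper also simply reads $e\geqslant 2$ off Table~2. Your torus heuristic for it is not literally correct: for $PSL_2(q)$ with $q\equiv 3\pmod 4$, the prime $2$ divides $q-1$ but not $|B|=q(q-1)/2$.

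The genuine gap is the step you flag yourself. You never prove that the Sylow $r$-subgroup $R$ is cyclic; you only hope \cite{gsw} states this for your $r$. What the paper actually takes from \cite{gsw} is only that $r$ does not divide the order of any proper parabolic subgroup. Without cyclicity, Lemma~\ref{l-cyc} does not apply, so no bound on $r$, and hence none on $k$, follows.

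The missing idea is to derive cyclicity from that non-divisibility, which the paper does in a few lines.
\begin{itemize}
\item First, $r\ne 2$. For odd $p$, the prime $2$ divides $p-1$, so it is not a primitive divisor of $p^{ke}-1$ when $ke>1$. For $p=2$, $r$ divides the odd number $2^{ke}-1$. Oddness matters, since a generalized quaternion $2$-group is non-cyclic yet contains no $C_2\times C_2$.
\item For odd $r$, a non-cyclic $R$ contains an elementary abelian subgroup $E$ of order $r^2$ \cite[Theorem~5.4.10]{gor}.
\item A non-cyclic abelian group has no faithful irreducible representation \cite[Theorem~3.2.3]{gor}, so $E$ acts reducibly on the natural module.
\item The paper uses this reducibility to place $E$ in a proper parabolic subgroup. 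Then $r$ divides the order of that parabolic, a contradiction.
\end{itemize}
With $R$ cyclic, the rest of your argument goes through as written.
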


\begin{proof}
 We can assume that $G$ is one of the groups in  \cite[Table~2]{gsw}, and let $e$ be the corresponding exponent. We observe that, since $q^e = p^{ke}$, a Zsigmondy prime for the pair $(p, ke)$ is also a Zsigmondy prime for $(q, e)$ and therefore also divides the order of $G$ but does not divide the order of any proper parabolic subgroup of $G$. We can obviously assume that $k\geqslant 4$, so that $ke>6$ for every exponent $e$ in  \cite[Table~2]{gsw}. Therefore the pair $(p, ke)$ is never an exceptional case of Zsigmondy's theorem. Hence there is a Zsigmondy prime $r$ for $(p, ke)$. Since the multiplicative order of $p$ modulo $r$ is precisely $ke$, we have $ke\leqslant r-1$.

 Let $R$ be a Sylow $r$-subgroup of $G$. We claim that $R$ is cyclic. Note that $r\ne 2$, since either $r$ does not divide $p-1$ for $p\ne 2$, or $r$ divides $p^{ke}-1$ for $p=2$. If $R$ is non-cyclic, then $R$ contains an elementary abelian subgroup $E$ of order $r^2$ (see, for example, \cite[Theorem~5.4.10]{gor}). But then the action of $E$ on the natural $\mathbb{F}_q$-module is reducible (see, for example, \cite[Theorem~3.2.3]{gor}), so that $E$ is contained in a proper parabolic subgroup. Then $r$ divides the order of this parabolic subgroup, contrary to the above.

 Since $R$ is cyclic,  the order of $R$ is $m$-bounded by Lemma~\ref{l-cyc}, in particular, $r\leqslant (m-1)^2$. As a result, $k\leqslant ke\leqslant r-1\leqslant (m-1)^2-1$.
\end{proof}

Thus, for our group $G\cong L_d(q)$ of Lie type $L$ of Lie rank $d$ over a finite field $\mathbb{F} _q$ of characteristic $p$  with $q=p^k$, we have upper bounds in terms of $m$ for $d$ by Lemma~\ref{l-lierank}, for $p$ by Lemma~\ref{l-rp}, and for $k$ by Lemma~\ref{l-exp}. Therefore the order of $G$ is $m$-bounded.
\end{proof}

\section{Right sink: general case}\label{s-rgeneral}

Here we prove Theorem~\ref{t-r} about the right sink  for an arbitrary finite group.

\begin{proof}[Proof of Theorem~\ref{t-r}]
Recall that $\varphi$ is an automorphism of a finite group $G$ such that $G=[G,\varphi ]$. Let $m=|{\mathscr R}(\varphi)|$, where  ${\mathscr R}(\varphi)$ is the minimal right Engel sink of $\varphi$ in $G\langle\varphi\rangle$. We need to prove that $|G|$ is $m$-bounded.
We proceed by induction on $m$. For $m=1$ the element $\varphi$ is hypercentral in $G\langle\varphi\rangle$, so that  $[G,\varphi ]\langle\varphi\rangle$ is nilpotent and therefore $G=[G,\varphi ]=1$.

In general, there are two possibilities. One is when $G$ contains a minimal normal $\varphi$-invariant subgroup $N$ such that $|{\mathscr R}(\bar \varphi)|<m$ in $\bar G\langle\bar\varphi\rangle=G\langle\varphi\rangle/N$. The alternative is that $G$ has no such subgroups.

First we consider the case where $G$ is abelian.

\begin{lemma}\label{l-rabelian}
Suppose that $G$ is abelian. Then $|G|=m$.
\end{lemma}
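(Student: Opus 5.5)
The plan is to show directly that ${\mathscr R}(\varphi)=G$ when $G$ is abelian and $G=[G,\varphi]$, which gives $|G|=m$ at once.

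\emph{Inclusion ${\mathscr R}(\varphi)\subseteq G$.} Since $G$ is normal in $G\langle\varphi\rangle$ with abelian quotient $\langle\varphi\rangle$, every commutator $[\varphi,x]$ with $x\in G\langle\varphi\rangle$ lies in $G$. Hence all $[\varphi,{}_nx]$ with $n\geqslant 1$ lie in $G$, so the minimal sink is contained in $G$.

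\emph{Inclusion $G\subseteq{\mathscr R}(\varphi)$.} We may regard $G$ as an abelian $\langle\varphi\rangle$-module. Since $G=[G,\varphi]$, the proof of Lemma~\ref{l3}(c) (that is, \cite[Lemma~2.5(a)]{khu-shu25}) shows that $C_G(\varphi)=1$ and that $\delta\colon u\mapsto[u,\varphi]$ is an automorphism of $G$, in particular a permutation of the finite set $G$. Commutators with elements of $G$ alone are useless, because $[[\varphi,g],g]=1$ for $g\in G$. So we use elements $x=g\varphi$ with $g\in G$.
\begin{itemize}
\item By the formula $[a,bc]=[a,c][a,b]^c$ we get $[\varphi,g\varphi]=[\varphi,g]^{\varphi}$.
\item For $u\in G$ we get $[u,g\varphi]=[u,\varphi][u,g]^{\varphi}=[u,\varphi]$, since $[u,g]=1$.
\item Hence $[\varphi,{}_{n}\,g\varphi]=\delta^{\,n-1}\bigl([\varphi,g]^{\varphi}\bigr)$ for $n\geqslant 1$.
\end{itemize}
Because $\delta$ is a permutation of a finite set, this sequence is purely periodic. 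Therefore every term of it, in particular $[\varphi,g]^{\varphi}$ itself, occurs for arbitrarily large $n$ and so lies in ${\mathscr R}(\varphi)$ (the trivial element included).

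It remains to see that $[\varphi,g]^{\varphi}$ runs over all of $G$. We have $[\varphi,g]=[g,\varphi]^{-1}=\delta(g)^{-1}$, and $\delta$ is surjective, so $[\varphi,g]$ runs over all of $G$ as $g$ does. Conjugation by $\varphi$ is a bijection of $G$, so $[\varphi,g]^{\varphi}$ also runs over all of $G$. Thus $G\subseteq{\mathscr R}(\varphi)$, whence ${\mathscr R}(\varphi)=G$ and $|G|=m$.

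The main point to get right is the choice of test elements $x=g\varphi$ outside $G$, together with the identity $[u,g\varphi]=[u,\varphi]$ for $u\in G$. This identity reduces the right Engel iteration to iterating the bijection $\delta$, whose orbits are cycles. The only other ingredient is the bijectivity of $\delta$, which is already available from Lemma~\ref{l3}(c).
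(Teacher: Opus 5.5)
Your proof is correct, but it takes a different route from the paper's.

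\textbf{The paper's route.} The paper's argument is two lines long and rests on cited lemmas. Since $G$ is abelian, $G=[G,\varphi]$ gives $G=[G,\varphi^{-1}]$. Lemma~\ref{l3}(c) then gives $G={\mathscr L}(\varphi^{-1})$. Finally, the metabelian inclusion ${\mathscr L}(g^{-1})\subseteq{\mathscr R}(g)$ of Lemma~\ref{l-metab}, applied in the metabelian group $G\langle\varphi\rangle$, gives $G\subseteq{\mathscr R}(\varphi)$.

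\textbf{Your route.} You bypass Lemma~\ref{l-metab} entirely. With the test elements $g\varphi$ and the identity $[u,g\varphi]=[u,\varphi]$ for $u\in G$, you reduce the right Engel iteration directly to iterating the bijection $\delta$. The only input you need is that $\delta$ is a surjective endomorphism of the finite abelian group $G$, which is elementary.

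\textbf{What each approach buys.} The paper's version is shorter because it reuses tools it needs anyway: Lemma~\ref{l-metab} also serves Corollary~\ref{c-gtright}. It also fits the paper's theme of comparing left and right sinks. Your version has three advantages:
\begin{itemize}
\item It is self-contained.
\item Your remark that $[[\varphi,g],g]=1$ for $g\in G$ shows that ${\mathscr R}_G(\varphi)=\{1\}$ here, which explains why the sink must be taken in $G\langle\varphi\rangle$.
\item Combined with the trivial inclusion ${\mathscr R}(\varphi)\subseteq G$, which you make explicit, it gives the exact equality ${\mathscr R}(\varphi)=G$, and hence $|G|=m$ as stated. The paper's written proof only records $|G|\leqslant m$, which is all that is used later.
\end{itemize}
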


\begin{proof}
When $G$ is abelian, the condition $G=[G,\varphi ]$ implies that also  $G=[G,\varphi^{-1}]$. Then  $G={\mathscr L}(\varphi^{-1})$ by Lemma~\ref{l3}(c), while ${\mathscr L}(\varphi^{-1})\subseteq {\mathscr R}(\varphi)$ by Lemma~\ref{l-metab}. Hence $|G|\leqslant m$.
\end{proof}

We now consider the case where $G$ itself is a minimal normal $\varphi$-invariant subgroup.

\begin{lemma}\label{l-relementary}
Suppose that $G$ has no proper (nontrivial) normal $\varphi$-invariant subgroups. Then the order of $G$ is $m$-bounded.
\end{lemma}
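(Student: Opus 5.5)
The plan is to use the structure of a group with no proper nontrivial normal $\varphi$-invariant subgroups. Such a $G$ is characteristically simple with respect to $\varphi$, so $G$ is either elementary abelian or $G=S_1\times\dots\times S_t$ is a direct product of isomorphic non-abelian simple groups that $\varphi$ permutes transitively. The abelian case is already covered by Lemma~\ref{l-rabelian}, which gives $|G|=m$. So assume $G=S_1\times\dots\times S_t$, numbered so that $S_i^\varphi=S_{i+1}$ with indices mod $t$. It suffices to bound $t$ and $|S_1|$ in terms of $m$.

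\textbf{The case $t=1$.} Here $\varphi$ is a nontrivial automorphism of the simple group $G$ (nontrivial because $G=[G,\varphi]\neq 1$). Since ${\mathscr R}_G(\varphi)\subseteq{\mathscr R}(\varphi)$, Theorem~\ref{t-rsimple} bounds $|G|$ directly.

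\textbf{The case $t\geqslant 2$: reduction to left sinks in $S_2$.} I would produce right Engel sequences of $\varphi$ whose coordinates reproduce left Engel sequences inside a single factor. Take $x=ab$ with $a\in S_1$ and $b\in S_2$.
\begin{itemize}
\item We have $[\varphi,x]=(x^{-1})^\varphi x$, whose $S_2$-coordinate is $c\,b$ with $c=(a^{-1})^\varphi\in S_2$. As $a$ runs over $S_1$, the element $c$ runs over all of $S_2$.
\item For $n\geqslant 1$, $[\varphi,{}_n x]=[[\varphi,x],{}_{n-1}x]$, and coordinatewise projection onto $S_2$ is a homomorphism. So the $S_2$-coordinate of $[\varphi,{}_n x]$ is $[cb,{}_{n-1}b]$.
\item Using $[cb,b]=[c,b]^b$ and induction, this equals $[c,{}_{n-1}b]^b$.
\end{itemize}
The same holds for $t=2$; there the $S_1$-coordinate is just some other element, which we ignore.

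Let $\pi_2$ denote projection onto $S_2$. Then $\pi_2({\mathscr R}(\varphi))$ has at most $m$ elements. For every $b\in S_2$ it contains the $b$-conjugates of all sufficiently long commutators $[c,{}_n b]$, $c\in S_2$. Conjugation by $b$ is a bijection, so $|{\mathscr L}_{S_2}(b)|\leqslant m$ for every $b\in S_2$. Theorem~\ref{t-old-l} then gives a normal subgroup of $S_2$ of $m$-bounded order with nilpotent quotient. Since $S_2$ is non-abelian simple, it is that subgroup, so $|S_2|=|S_1|$ is $m$-bounded.

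\textbf{Bounding $t$.} Choose $b\in S_2$ that is not left Engel in $S_2$; this is possible because $S_2$ is not nilpotent (Baer/Zorn). Then some $[c,{}_n b]$ is nontrivial for all $n$. By the computation above, there is $x\in S_1S_2$ for which $[\varphi,{}_n x]$ has nontrivial $S_2$-coordinate for all $n$, so ${\mathscr R}(\varphi)$ contains an element with nontrivial $S_2$-coordinate.

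For $t\geqslant 3$, this element is also supported in $S_1S_2S_3$. Indeed, $x^\varphi\in S_2S_3$, so $[\varphi,x]\in S_1S_2S_3$, and commutators with $x\in S_1S_2$ stay there. Shifting indices, i.e. taking $x\in S_iS_{i+1}$, gives for each $i$ an element of ${\mathscr R}(\varphi)$ with nontrivial $S_{i+1}$-coordinate and support in $S_iS_{i+1}S_{i+2}$. Elements attached to indices $i$ that are at least $3$ apart have disjoint supports and are nontrivial, hence distinct. So $t\leqslant 3m$ roughly, and $|G|=|S_1|^t$ is $m$-bounded.

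\textbf{Main obstacle.} The delicate point is the coordinate computation: verifying that the $S_2$-coordinate of $[\varphi,{}_n x]$ really is a conjugate of $[c,{}_{n-1}b]$ with $c$ ranging over all of $S_2$. This needs care with the convention $[\varphi,x]=\varphi^{-1}x^{-1}\varphi x=(x^{-1})^{\varphi}x$ and with the extra wrap-around coordinate when $t=2$. If the projection argument turned out to be flawed there, I would fall back on a separate direct treatment of $t=2$.
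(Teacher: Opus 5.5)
Your proposal is correct and follows essentially the same route as the paper. You reduce to $S_1\times\dots\times S_t$ and use Theorem~\ref{t-rsimple} for $t=1$. For $t\geqslant 2$ you read off the left sinks in $S_2$ from the $S_2$-coordinates of $[\varphi,{}_n x]$ with $x\in S_1S_2$, apply Theorem~\ref{t-old-l}, and then count nontrivial elements of ${\mathscr R}(\varphi)$ across the factors. The coordinate computation you flagged as delicate does check out, including for $t=2$.

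The only difference is in bounding $t$. The paper notes that for $t\geqslant 3$ and $n\geqslant 2$ the commutator $[\varphi,{}_n x]$ is supported on $S_2$ alone, since its $S_1$- and $S_3$-coordinates become $[a,a]=1$ and $[(b^{-1})^{\varphi},1]=1$. Combined with the $\varphi$-invariance of ${\mathscr R}(\varphi)$, this gives $t\leqslant m$. Your coarser support $S_iS_{i+1}S_{i+2}$ gives about $t\leqslant 3m+2$ instead, which is equally sufficient.
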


\begin{proof}
If $G$ is elementary abelian, then  $|G|$ is $m$-bounded by Lemma~\ref{l-rabelian}.

Now let $G=S_1\times\dots\times S_k$ be a direct product of isomorphic non-abelian simple groups transitively permuted by $\varphi$. We regard elements of $G$ as $k$-tuples of elements of $S\cong S_1$ with $x\in S$ in the $j$-th coordinate being $x^{\varphi^{j-1}}$ for $x\in S_1$.   If $k=1$, then $|G|=|S_1|$ is $m$-bounded by Theorem~\ref{t-rsimple}. Let $k\geqslant 2$. For an element $g=(x,y,\dots )$, where $x,y\in S$, we have $[\varphi, g]=(*,x^{-1}y)$ when $k=2$, and $[\varphi, g]=(x,x^{-1}y, y^{-1},\dots )$ when $k\geqslant 3$. In either case the commutator $[\varphi ,{}_ng]$ has  $[x^{-1}y, {}_{n-1}y]$ in its second coordinate. When $x$ varies, the commutators $[x^{-1}y, {}_{n-1}y]$ for big enough $n$ fill the left Engel sink ${\mathscr L}_S(y)$ in $S$.  Since the commutators $[\varphi ,{}_ng]$ for big enough $n$ belong to ${\mathscr R}(\varphi)$, which has cardinality $m$, it follows that $|{\mathscr L}_S(y)|\leqslant m$ for any $y\in S$. Then $|S|$ is $m$-bounded by Theorem~\ref{t-old-l}.

It remains to show that $k$ is $m$-bounded. Clearly, we can assume that $k\geqslant 3$. Then  for $g=(x,y,1,\dots )$, where $x,y\in S$, the commutators $[\varphi ,{}_ng]$ have the form $(1,[x^{-1}y, {}_{n-1}y],1,\dots )$. When $x$ varies, the commutators $[x^{-1}y, {}_{n-1}y]$ for big enough $n$ fill the left Engel sink ${\mathscr L}_S(y)$ in $S$. Since $S$ is simple, $y$ cannot be a left Engel element, so that there are some nontrivial elements $[\varphi ,{}_ng]$ of ${\mathscr R}(\varphi)$ in $S_2$. Their images under powers of $\varphi$ produce non-trivial elements of ${\mathscr R}(\varphi)$ in every $S_i$. Hence $k\leqslant m$.

Thus, the order of $G$ is $m$-bounded.
\end{proof}

Before proceeding with the proof, we make an observation about limit cycles in ${\mathscr R}( \varphi)$. Namely,  for every element $x\in G$ there is a least positive integer $i(x)$ such that $[\varphi,{}_{i(x)}x]\in {\mathscr R}( \varphi)$. If $[\varphi,{}_{i(x)}x]\ne 1$, this means that $[\varphi,{}_{i(x)}x]$ belongs to a limit cycle in ${\mathscr R}( \varphi)$. Then $1\ne [\varphi,{}_jx]\in {\mathscr R}( \varphi)$ for all $j\geqslant i(x)$. Let $i_0=\max\{i(x)\mid x\in G\}$.

\begin{lemma}\label{l-rinduct}
Suppose that $N$ is a (nontrivial) minimal normal $\varphi$-invariant subgroup of $G$ such that $|{\mathscr R}_{G\langle\varphi\rangle/N}( \varphi)|<m$. Then the order of $G$ is $m$-bounded.
\end{lemma}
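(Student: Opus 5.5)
The plan is to reduce to bounding $|N|$. Since $\bar G=G/N$ satisfies $\bar G=[\bar G,\varphi]$ and $|{\mathscr R}(\bar\varphi)|<m$, the induction hypothesis on $m$ shows that $|G/N|$ is $m$-bounded. Hence it remains to show that $|N|$ is $m$-bounded. Since $N$ is a minimal normal $\varphi$-invariant subgroup of $G$, it is either elementary abelian or a direct product $S_1\times\dots\times S_k$ of isomorphic non-abelian simple groups permuted transitively by $G\langle\varphi\rangle$. We also use the following facts:
\begin{itemize}
\item $G\langle\varphi\rangle$ is generated modulo $N$ by an $m$-bounded number of conjugates $\varphi^{g_1},\dots,\varphi^{g_t}$ with $g_i\in G$. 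This holds because $G=[G,\varphi]$ and $|G/N|$ is bounded.
\item Each conjugate satisfies ${\mathscr R}(\varphi^{g_i})={\mathscr R}(\varphi)^{g_i}$, so it has right sink of cardinality $m$.
\end{itemize}

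\emph{Abelian case.} Let $N$ be an elementary abelian $p$-group. For each $\psi=\varphi^{g_i}$ the subgroup $N\langle\psi\rangle$ is metabelian. Lemma~\ref{l-metab} then gives ${\mathscr L}_N(\psi^{-1})\subseteq{\mathscr R}(\psi)$, so $|{\mathscr L}_N(\psi^{-1})|\leqslant m$, and this sink is a subgroup by Lemma~\ref{l3}(a). Write $N=N_0(\psi)\times N_1(\psi)$ for the Fitting decomposition of $N$ with respect to $\psi^{-1}$ acting as the linear map $u\mapsto[u,\psi^{-1}]$. The stable part $N_1(\psi)$ lies in ${\mathscr L}_N(\psi^{-1})$, so $|N_1(\psi)|\leqslant m$. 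On $N_0(\psi)$ the element $\psi$ acts unipotently, hence as a $p$-automorphism.

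Let $W=\bigcap_i N_0(\varphi^{g_i})$; this has $m$-bounded index in $N$. If we can show that some bounded-index piece of $N$ on which all the $\varphi^{g_i}$ act unipotently must be small, we are done. For this we would use Lemma~\ref{l-pp}, with bounded $|\alpha|$ coming from $|G/N|$ bounded, together with the fact that $[N,G\langle\varphi\rangle]\ne 1$ unless $N$ is central. If $N$ is central in $G\langle\varphi\rangle$, then $G/Z$ is bounded for a central $Z\geqslant N$ and $G=[G,\varphi]$ is perfect modulo a bounded part, and a Schur-multiplier-type argument (the multiplier of a bounded group is bounded) bounds $|N|$.

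\emph{Non-abelian case.} Here I would imitate the proof of Lemma~\ref{l-relementary}, but for conjugates $\varphi^{g}$ rather than $\varphi$, and with the limit cycles entirely inside $N$. Choose $g$ so that $\varphi^{g}$ moves some $S_i$; if no conjugate moves any $S_i$, then $G$ normalizes each $S_i$, and minimality forces $k$ to be bounded. Taking elements $x=(a,b,1,\dots)\in N$, the second coordinate of $[\varphi^g,{}_n x]$ runs over ${\mathscr L}_S(b)$. This gives $|{\mathscr L}_S(b)|\leqslant m$ for all $b\in S$, so $|S|$ is bounded by Theorem~\ref{t-old-l}. Nontriviality of these sinks in each factor, transported around by $G\langle\varphi\rangle$, bounds $k\leqslant m$ times $|G/N|$. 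If instead $\varphi^g$ normalizes every $S_i$ and acts nontrivially on some $S_i$, Theorem~\ref{t-rsimple} applied to $S_i$ bounds $|S|$.

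I expect the main obstacle to be the abelian case where the $\varphi^{g_i}$ act unipotently on a large part of $N$. There the right sink gives no direct information, and one must combine Lemma~\ref{l-pp} with the hypothesis that the sink shrinks modulo $N$. That hypothesis means some limit cycle of $\varphi$ lies inside $N$, or two cycles merge modulo $N$. I would try first to show that this forces $[N,\varphi]\ne1$ on a $\varphi$-stable piece, so that Lemma~\ref{l3}(c) together with Lemma~\ref{l-metab} puts a whole nontrivial $\varphi$-stable section into ${\mathscr R}(\varphi)$.
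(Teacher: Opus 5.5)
\textbf{There is a genuine gap: the abelian case is not proved.} Your reduction to bounding $|N|$ agrees with the paper. Your non-abelian case is workable, though the bound on $k$ needs an orbit count you only gesture at. The paper's route there is shorter: inside $[N,\varphi]\ne 1$ it takes a minimal normal $\varphi$-invariant subgroup $M$ of $N$ with $[M,\varphi]=M$, bounds $|M|$ by Lemma~\ref{l-relementary}, and notes that $N$, the normal closure of $M$, is a product of at most $|G/N|$ conjugates of $M$.

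The abelian case is the heart of the lemma, and you leave open the step ``a bounded-index piece of $N$ on which all $\varphi^{g_i}$ act unipotently must be small''. Your tools cannot close it. Lemma~\ref{l-pp} needs bounds on $|C_N(\psi)|$ and on the order of $\psi$ on $N$. Nothing in your setup bounds $|C_N(\psi)|$, and $|G/N|$ alone does not control the order of $\psi$ on $N$. More fundamentally, suppose $V\leqslant N$ is $\psi$-invariant and $\psi$ acts unipotently on $V$. Then $V\langle\psi\rangle$ is nilpotent (a $p$-group modulo the central subgroup $C_{\langle\psi\rangle}(V)$), so $\mathscr{R}_{V\langle\psi\rangle}(\psi)=\{1\}$. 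Thus Fitting decompositions, Lemma~\ref{l-metab} and Lemma~\ref{l3}, applied to $\varphi$ or its conjugates acting on $N$ alone, can never detect this part of $N$. The shrinking of the sink modulo $N$ does not help either; it is needed only to bound $|G/N|$ by induction. A smaller point: split according to whether $N\leqslant Z(G)$, not $Z(G\langle\varphi\rangle)$. Then Schur's theorem bounds $|G'|$ and Lemma~\ref{l-rabelian} gives $|G/G'|\leqslant m$, which covers the whole central case.

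\textbf{The missing idea} is to use an element of the sink outside $N$ that acts nontrivially on $N$, and to perturb $x$ in $[\varphi,{}_ix]$ by elements of $N$.
If $N\not\leqslant Z(G)$, then $\mathscr{R}(\varphi)\not\subseteq C_G(N)$; otherwise $\varphi$ would be hypercentral modulo $C_G(N)$, giving $G=[G,\varphi]\leqslant C_G(N)$.
Take $a=[\varphi,{}_ix]\in\mathscr{R}(\varphi)\setminus C_G(N)$, with $i$ so large that $[\varphi,{}_jy]\in\mathscr{R}(\varphi)$ for all $y$ and all $j\geqslant i$.
For $n\in N$ one has $[\varphi,{}_i(nx)]=n_1a$ and $[\varphi,{}_{i+1}(nx)]=[n_1a,nx]=bn_2$, where $b=[\varphi,{}_{i+1}x]$. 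Both lie in $\mathscr{R}(\varphi)\setminus C_G(N)$.
Expanding in the abelian group $N$ gives $[a,n]^x=[x,a]\,[x,n_1]^a\,bn_2$. Hence $[a,n]$ takes at most $(m-1)^2$ values, that is, $|N:C_N(a)|\leqslant(m-1)^2$.
Now $N$ normalizes $C_N(a)$, $|G/N|$ is bounded, and $\varphi$ permutes the $m$ elements of $\mathscr{R}(\varphi)$. So $C_N(a)$ has boundedly many conjugates in $G\langle\varphi\rangle$.
Their intersection is a proper normal $\varphi$-invariant subgroup of bounded index in $N$. By minimality of $N$ it is trivial, and so $|N|$ is bounded.
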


\begin{proof}
If $N=G$, then $|G|$ is $m$-bounded  by Lemma~\ref{l-relementary}. Therefore we can assume that $N<G$. By the induction hypothesis, the order of $G/N$ is $m$-bounded. We need to show that the order of $N$ is also $m$-bounded.

 First suppose that $N$ is a direct product of non-abelian simple groups. Then $[N,\varphi ]\ne 1$, as otherwise $G=[G,\varphi ]$ would centralize $N$, a contradiction. The normal subgroup $[N,\varphi ]$ of $N$ is a product of some of the simple factors of $N$. Note that $C_N([N,\varphi ])$ is $\varphi$-invariant and is the product of the simple factors of $N$ that are not in $[N,\varphi ]$. By the same argument, $[N,\varphi ]$ is a direct product of several (possibly one) minimal normal $\varphi$-invariant subgroups of $N$, and if $M$ is one of these  subgroups, then $[M,\varphi ]=M$. Then $|M|$ is $m$-bounded  by Lemma~\ref{l-relementary}.  Since  $M$ is normal in $N\langle\varphi\rangle$, there are at most $|G/N|$ conjugates of $M$. Since $|G/N|$ is $m$-bounded,   the normal closure of $M$ in $G\langle\varphi\rangle$, which is $N$, is of $m$-bounded order, as required.

 Now let $N$ be abelian. If $N$ is central in $G$, then the order of the derived subgroup $G'$ is $m$-bounded by Schur's theorem \cite[Theorem~4.12]{rob1}. The order of the abelian quotient group $\bar G =G/G'$ is at most $m$ by Lemma~\ref{l-rabelian}. As a result, $|G|$ is $m$-bounded.

 Thus we can assume that $N$ is not central in $G$. Note that then ${\mathscr R}( \varphi)\not\subseteq C_G(N)$, as otherwise $\varphi$ would be hypercentral in $G\langle\varphi\rangle/C_G(N)$, so that we would have  $G=[G,\varphi ]=C_G(N)$ and $N$ would be central in $G$, contrary to our assumption.

 Let $a\in {\mathscr R}( \varphi)\setminus C_G(N)$. There is $x\in G$ and a positive integer $i$ such that $a=[\varphi ,{}_ix]$. Adding if necessary several times the length of the limit cycle of  ${\mathscr R}( \varphi)$ containing $a$, we can assume that $i\geqslant i_0$ (where $i_0$ is defined before Lemma~\ref{l-rinduct}).
 Then for any $n\in N$ we have $[\varphi ,{}_i(nx)]=n_1a$ for some $n_1\in N$ and the commutator $[\varphi ,{}_i(nx)]$ also belongs to
 ${\mathscr R}( \varphi)\setminus C_G(N)$. Moreover,
 the commutators $b=[\varphi ,{}_{i+1}x]$ and $[\varphi ,{}_{i+1}(nx)]$ also  belong to ${\mathscr R}( \varphi)\setminus C_G(N)$ and $[\varphi ,{}_{i+1}(nx)]=[n_1a, nx]=bn_2$ for some $n_2\in N$. Using the fact that $N$ is abelian, we rewrite the last equation as
 $$
 [a,n]^x=[x,a]\cdot [x,n_1]^a\cdot bn_2.
 $$
 For given $a,b$ there are at most $m-1$ values for each of the elements $n_1,n_2$ for the products $an_1,bn_2$ to be in  ${\mathscr R}( \varphi)\setminus C_G(N)$. Hence there are at most $(m-1)^2$ possibilities for the value of the commutator $[a,n]$ with $n\in N$. This means that $|N:C_N(a)|\leqslant (m-1)^2$.  Since $|G/N|$ is $m$-bounded and $N$ is abelian, the normalizer of $C_N(a)$ in $G$ has $m$-bounded index. The automorphism $\varphi$ permutes the elements of ${\mathscr R}( \varphi)$ and therefore their centralizers. Since $|{\mathscr R}( \varphi)|=m$,  the normalizer of $C_N(a)$ in $\langle\varphi\rangle$ also has $m$-bounded index. As a result, the number of conjugates of $C_N(a)$ in $G\langle\varphi\rangle$ is $m$-bounded, so that their  intersection has $m$-bounded index in $N$. Since $C_N(a)\ne N$ and $N$ is a minimal normal $\varphi$-invariant subgroup, this intersection must be trivial, so that $|N|$ is $m$-bounded, as required.
 \end{proof}

\begin{lemma}
  \label{l-rlast}
  Suppose that $G$ has no (nontrivial) minimal normal $\varphi$-invariant subgroups of $G$ such that $|{\mathscr R}_{G\langle\varphi\rangle/N}(\varphi)|<m$. Then the order of $G$ is $m$-bounded.
\end{lemma}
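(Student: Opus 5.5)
The plan is to show that under this hypothesis every minimal normal $\varphi$-invariant subgroup is central and that this forces $G$ to be abelian, so that Lemma~\ref{l-rabelian} gives $|G|= m$. First I record what the hypothesis means. The image of ${\mathscr R}(\varphi)$ in $G\langle\varphi\rangle/N$ is exactly ${\mathscr R}(\bar\varphi)$. So $|{\mathscr R}(\bar\varphi)|=m$ says that ${\mathscr R}(\varphi)$ maps \emph{injectively} modulo every minimal normal $\varphi$-invariant subgroup $N$. Since $1\in{\mathscr R}(\varphi)$ when $G\ne 1$ (as in the case $[\varphi,{}_n x]=1$ for $x=1$), this gives ${\mathscr R}(\varphi)\cap N\subseteq\{1\}$.

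Next I use the setup of the proof of Lemma~\ref{l-rinduct}. Let $a=[\varphi,{}_ix]\in{\mathscr R}(\varphi)$ with $i\geqslant i_0$. For every $n\in N$ the element $[\varphi,{}_i(nx)]$ lies in ${\mathscr R}(\varphi)$ and is congruent to $a$ modulo $N$, so injectivity gives
$$[\varphi,{}_i(nx)]=[\varphi,{}_ix]\quad\text{for all } x\in G,\ n\in N,\ i\geqslant i_0.$$
Two consequences follow.
\begin{itemize}
\item[(i)] Taking $x=1$ gives $[\varphi,{}_i n]\in{\mathscr R}(\varphi)\cap N$, so $[\varphi,{}_in]=1$ and $\varphi$ is right Engel on $N$. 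If $N$ were non-abelian, the coordinate computation in the proof of Lemma~\ref{l-relementary} would produce nontrivial long commutators $[\varphi,{}_n g]$ inside $N$ whenever $[N,\varphi]\ne1$. Hence $[N,\varphi]=1$. Then $C_{G\langle\varphi\rangle}(N)$ is a normal subgroup containing $\varphi$, so it contains $[G,\varphi]=G$, which is absurd for non-abelian $N$. So $N$ is abelian.
\item[(ii)] For abelian $N$, apply the identity at steps $i$ and $i+1$, with $n_1=n_2=1$ in the displayed equation of Lemma~\ref{l-rinduct}. This gives $[a,n]^x=[x,a]\,b$ with $b=[a,x]$, that is, $[a,n]=1$. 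So every element of ${\mathscr R}(\varphi)$ centralizes $N$. Then $\varphi$ is hypercentral in $G\langle\varphi\rangle/C_G(N)$, so $G=[G,\varphi]\leqslant C_G(N)$ and $N\leqslant Z(G)$.
\end{itemize}

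Third, the displayed identity composes: if it holds for $N_1$ and $N_2$, it holds for $n_1n_2$. So it holds for the product $M$ of all minimal normal $\varphi$-invariant subgroups, which is a nontrivial central subgroup, and ${\mathscr R}(\varphi)$ maps injectively modulo $M$. I then want to iterate: pass to $G/M$, show that its minimal normal $\varphi$-invariant subgroups again have the identity property, and conclude that they are central in $G/M$. Climbing in this way gives $G\leqslant\zeta_\infty(G)$, so $G$ is nilpotent. Once $G$ is nilpotent, $G/G'=[G/G',\varphi]$ has order at most $m$ by Lemma~\ref{l-rabelian}. The class of $G$ should be bounded by the number of iterations, which is controlled because each step preserves $|{\mathscr R}|=m$ injectively. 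Lemma~\ref{l-nilporder} then bounds $|G|$.

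The main obstacle is the iteration step. The injectivity property is not obviously inherited by minimal normal $\varphi$-invariant subgroups of $G/M$, because their preimages in $G$ need not be minimal. What I would try first is to prove directly that the set $T$ of $t\in G$ with $[\varphi,{}_i(tx)]=[\varphi,{}_ix]$ for all $x\in G$ and $i\geqslant i_0$ is a normal $\varphi$-invariant subgroup. Then I would show that its intersection with the preimage of a minimal normal $\varphi$-invariant subgroup of $G/T$ is strictly larger than $T$. This would force $T=G$, that is, ${\mathscr R}(\varphi)$ would be a single limit cycle independent of $x$, and hence the conclusion that $\varphi$ is hypercentral and $G=1$.
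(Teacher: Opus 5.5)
\textbf{There is a genuine gap: the proposal never bounds $|G|$, and its intended way of finishing aims at a false statement.}

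Your first two steps are sound, and they are exactly the computation the paper uses. Injectivity of $\mathscr R(\varphi)$ modulo $N$ gives $[\varphi,{}_i(nx)]=[\varphi,{}_ix]$ for $i\geqslant i_0$. Hence $[a,nx]=[a,x]$, so $[a,n]=1$ for all $a\in\mathscr R(\varphi)$ and $n\in N$, and therefore $N\leqslant Z(G)$. This needs neither that $N$ is abelian nor that it is minimal, so your step (i) is superfluous.

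After that, the argument stalls. The iteration step is left open. The claim that $\mathscr R(\varphi)$ maps injectively modulo the product $M$ does not follow from the identity holding on $M$. The bound on the class ``by the number of iterations'' has no basis.

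Your proposed endgame, forcing $T=G$ and hence $G=1$, cannot work, because the hypothesis of the lemma is satisfied by nontrivial groups. Take $G=\langle a,b\rangle$ extraspecial of order $27$ and exponent $3$, with $c=[a,b]$. Let $\varphi\colon a\mapsto a^{-1},\ b\mapsto b^{-1}$, so that $c^\varphi=c$.
\begin{itemize}
\item Then $G=[G,\varphi]$.
\item For $x\in G$ we have $[\varphi,{}_3x]\in[G',G]=1$.
\item For $x=\varphi a^\alpha b^\beta c^\delta$, a direct computation gives $[\varphi,{}_nx]=a^{-\alpha}b^{-\beta}c^{\alpha\beta}$ for all $n\geqslant 1$.
\end{itemize}
So $\mathscr R(\varphi)$ has $9$ elements, one in each coset of $Z(G)=\langle c\rangle$. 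Moreover $Z(G)$ is the unique minimal normal $\varphi$-invariant subgroup. Thus the hypothesis holds with $m=9$ and $G\neq 1$. Here your $T$ equals $Z(G)$. The iteration already fails at the first step: the image of $\mathscr R(\varphi)$ is all of $G/Z(G)$, so it is not injective modulo any minimal normal subgroup of $G/Z(G)$.

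\textbf{The missing idea} is to use a \emph{maximal} subgroup with the injectivity property, together with the induction on $m$.
\begin{enumerate}
\item Let $H$ be maximal among proper normal $\varphi$-invariant subgroups with $|\mathscr R_{G\langle\varphi\rangle/H}(\varphi)|=m$. Such subgroups exist: any minimal normal $\varphi$-invariant subgroup qualifies, because $G$ itself cannot be minimal under the hypothesis.
\item A minimal normal $\varphi$-invariant subgroup $N/H$ of $G/H$ then satisfies $|\mathscr R_{G\langle\varphi\rangle/N}(\varphi)|<m$. So Lemma~\ref{l-rinduct} applied to $G/H$ makes $|G/H|$ $m$-bounded.
\item Your centralizer computation, applied to $H$, gives $[\mathscr R(\varphi),H]=1$. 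Since $G=[G,\varphi]$ is the normal closure of $\mathscr R(\varphi)$, we get $H\leqslant Z(G)$.
\item Schur's theorem then bounds $|G'|$, and Lemma~\ref{l-rabelian} gives $|G/G'|\leqslant m$.
\end{enumerate}
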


\begin{proof}
In particular, $G$ itself is not a minimal normal $\varphi$-invariant subgroup. Hence $G$ has proper nontrivial normal $\varphi$-invariant subgroups $N$ such that $|{\mathscr R}_{G\langle\varphi\rangle/N}( \varphi)|=m$, and let $H$ be a maximal subgroup with these properties. Then $G/H$ must have a  minimal normal $\varphi$-invariant subgroup $N/H$ such that $|{\mathscr R}_{G\langle\varphi\rangle/N}( \varphi)|<m$. Therefore $|G/H|$ is $m$-bounded by Lemma~\ref{l-rinduct}.

Since ${\mathscr R}(\varphi)$ has the same number of elements $m$ as its image ${\mathscr R}_{G\langle\varphi\rangle/N}(\varphi)$ in $G/H$,  every element   $u\in {\mathscr R}( \varphi)$ is the only element of ${\mathscr R}(\varphi)$ in the coset $Hu$.

Let $a\in {\mathscr R}( \varphi)$. There is $x\in G$ such that $[\varphi,{}_ix]=a$ for some $i$. Adding if necessary several times the length of the limit cycle of  ${\mathscr R}( \varphi)$ containing $a$, we can assume that $i\geqslant i_0$ (where $i_0$ is defined before Lemma~\ref{l-rinduct}). Then all the commutators $[\varphi,{}_i(hx)]$ for $h\in H$ belong to ${\mathscr R}(\varphi)$. Since they belong to the same coset of $H$ as $a$, we actually have  $[\varphi,{}_i(hx)]=a$ for all $h\in H$. Similarly, $[\varphi,{}_{i+1}(hx)]$ belongs to ${\mathscr R}(\varphi)$ for all $h\in H$, and therefore $[\varphi,{}_{i+1}(hx)]=[\varphi,{}_{i+1}x]$. As a result,
$$
[\varphi,{}_{i+1}(hx)]=[[\varphi,{}_{i}(hx)], hx]=[\varphi,{}_{i+1}x]=[a,hx]=[a,x],
$$
whence $[a,h]=1$ for all $h$. Thus, ${\mathscr R}(\varphi)$ centralizes $H$.

The condition $G=[G,\varphi ]$ implies that $G$ is the normal closure of ${\mathscr R}(\varphi)$. Indeed, in the quotient of $G\langle\varphi\rangle$ by this normal closure $\varphi$ is hypercentral, and therefore the image of $[G,\varphi ]\langle\varphi\rangle$ is nilpotent, whence the image of $G=[G,\varphi ]$ is trivial.

Since $H$ is normal, all conjugates of ${\mathscr R}(\varphi)$ also centralize $H$; as a result, $H$ is central in~$G$. Since $|G/H|$ is $m$-bounded, the derived subgroup $G'$ has $m$-bounded order by Schur's theorem \cite[Theorem~4.12]{rob1}. The abelian quotient $G/G'$ has order at most $m$ by Lemma~\ref{l-rabelian}. Hence $|G|$ is $m$-bounded.
\end{proof}

The proof of Theorem~\ref{t-r} is complete.
\end{proof}

\section{Left sink}\label{s-left}

Here we prove Theorem~\ref{t-l} about the left sink.

\begin{proof}[Proof of Theorem~\ref{t-l}]
Recall that $\varphi$ is an automorphism of a finite group $G$ such that $G=[G,\varphi ]$. We need to prove that $|G|$ is bounded above in terms of $m=|{\mathscr L}(\varphi)|$. By a result of Guralnick and Tracey \cite[Theorem~1.4]{gur-tra} we have $G=\langle {\mathscr L}(\varphi)\rangle$.

We now prove that the hypercentre of $G$ has $m$-bounded index.

\begin{lemma}
  \label{l-lhyp}
  If $Z(G)=1$, then  $|G|$ is $m$-bounded.
\end{lemma}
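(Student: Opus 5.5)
We work under the standing hypotheses $G=[G,\varphi]$, $G=\langle{\mathscr L}(\varphi)\rangle$ and $m=|{\mathscr L}(\varphi)|$, and assume $Z(G)=1$. The aim is to embed $G$ into a direct power of a group of $m$-bounded order by using centralizers of the elements of ${\mathscr L}(\varphi)$. Since ${\mathscr L}(\varphi)$ generates $G$ and $Z(G)=1$, we have
$$\bigcap_{a\in{\mathscr L}(\varphi)} C_G(a)=Z(G)=1 .$$
It therefore suffices to show that $|G:C_G(a)|$ is $m$-bounded for every $a\in{\mathscr L}(\varphi)$. Indeed, $G$ then embeds into the direct product of at most $m$ permutation groups on the cosets of the various $C_G(a)$, each of $m$-bounded degree, so $|G|$ is $m$-bounded. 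An equivalent route is to show that every element of ${\mathscr L}(\varphi)$ has a conjugacy class of $m$-bounded size. Then $G$ is generated by $m$ elements with boundedly many conjugates, so $|G/Z(G)|$ is bounded, and here $Z(G)=1$.

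To bound $|G:C_G(a)|$, I would imitate the limit-cycle argument from Lemma~\ref{l-rinduct} and Lemma~\ref{l-rlast}. First fix $j_0$ such that $[x,{}_j\varphi]\in{\mathscr L}(\varphi)$ for all $x\in G$ and all $j\geqslant j_0$. Write $a=[x,{}_j\varphi]$ with $j\geqslant j_0$, using periodicity of the limit cycle to make $j$ large. The key observation is that ${\mathscr L}(\varphi)$ is closed under conjugation by elements of the form $[y,{}_k\varphi]$ only in a weak sense. The more useful closure property is for the map $u\mapsto[u,\varphi]$: the image $\{[u,\varphi] : u\in{\mathscr L}(\varphi)\}$ lies in ${\mathscr L}(\varphi)$. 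I would try to show that for $g\in G$ the elements $[a^g,{}_k\varphi]$ take few values for large $k$, and then recover $a^g$ from them.

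Concretely, the plan is as follows. For $g\in G$, expand $[xg',{}_j\varphi]$ for suitable $g'$ and compare it with $a$. The simplest version is to take $g'\in C_G(\varphi)$. Then
$$[x^{g'},{}_j\varphi]=[x,{}_j\varphi]^{g'}=a^{g'}\in{\mathscr L}(\varphi),$$
so $C_G(\varphi)$ permutes ${\mathscr L}(\varphi)$ by conjugation. Since ${\mathscr L}(\varphi)$ generates $G$ and $Z(G)=1$, the group $C_G(\varphi)$ acts faithfully on ${\mathscr L}(\varphi)$, and hence $|C_G(\varphi)|\leqslant m!$. Now $\langle\varphi\rangle$ acts on $G$ with $m$-bounded fixed points, so I want to pass to a bounded order.

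Since $\varphi$ permutes ${\mathscr L}(\varphi)$, which generates $G$, some power $\varphi^{e}$ with $e\leqslant m!$ fixes every element of ${\mathscr L}(\varphi)$. Hence $\varphi^e$ centralizes $G$, so $\varphi$ induces an automorphism of $m$-bounded order. Take $\varphi$ to be of prime-power order after replacing it by suitable powers, and use that the order of $\varphi$ and $|C_G(\varphi)|$ are both $m$-bounded. Hartley's generalized Brauer--Fowler theorem then shows that $G$ has a soluble subgroup of $m$-bounded index, and hence a soluble normal subgroup $S$ of bounded index.

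The remaining task is to bound $S$. Here I would use induction on Fitting height, or Lemma~\ref{l-pp} together with coprime-action arguments, on abelian $\varphi$-invariant sections $V$ of $S$. On such sections $|C_V(\varphi)|$ and $|\varphi|$ are bounded. In the $p$-part this gives a bound by Lemma~\ref{l-pp}. In the coprime part we have $V=C_V(\varphi)\times[V,\varphi]$, and $[V,\varphi]$ lies in ${\mathscr L}$ by Lemma~\ref{l3}(c), so it has order at most $m$. This bounds every chief factor, and the bounded Fitting height then bounds $|S|$.

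\textbf{Expected main obstacle.} The hardest part is controlling the number of chief factors, not just their sizes. For this I would invoke the Kurdachenko--Subbotin generalization of Baer's theorem: bounded $|G/\zeta_\infty(G)|$ yields bounded $|\gamma_\infty(G)|$. Combined with $Z(G)=1$, which forces $\zeta_\infty(G)=1$, this should give the required bound on $|G|$.
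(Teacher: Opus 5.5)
Your first half is sound and matches the paper. Faithfulness of $C_G(\varphi)$ and $\langle\varphi\rangle$ on ${\mathscr L}(\varphi)$ bounds $|C_G(\varphi)|$ and $|\varphi|$, and Hartley's theorem then gives a soluble radical $S$ of $m$-bounded index. Do not replace $\varphi$ by prime-power powers first: a power may have a much larger centralizer, and Hartley's theorem does not need this. Bounding the order of each $\varphi$-invariant elementary abelian section is also achievable. For $p$ dividing $|\varphi|$, though, Lemma~\ref{l-pp} only applies after you pass to $\varphi_p$, using the splitting $C_V(\varphi_p)=C_V(\varphi)\times[C_V(\varphi_p),\varphi_{p'}]$ with the second factor lying in the sink.

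The genuine gap is the step from bounded chief factors to bounded $|S|$. Bounded chief factors do not bound the number of chief factors, even together with a bounded Fitting height (which you also assert without proof). A $p$-group has all chief factors of order $p$ and can be arbitrarily large. Your proposed remedy is circular. Kurdachenko--Subbotin bounds $|\gamma_\infty(G)|$ in terms of $|G/\zeta_\infty(G)|$, but $Z(G)=1$ gives $\zeta_\infty(G)=1$, so its hypothesis is exactly the bound on $|G|$ you are trying to prove. In the paper that theorem is used only after this lemma, to pass from $G/\zeta_\infty(G)$ to $G$. Your other route, bounding $|G:C_G(a)|$ for $a\in{\mathscr L}(\varphi)$, is never carried out.

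What is missing is a mechanism that uses up an element of the sink at each soluble layer, namely induction on $m$. Take a soluble minimal normal $\varphi$-invariant subgroup $N$; it is an elementary abelian $p$-group of bounded order. Suppose $|{\mathscr L}_{G/N}(\varphi)|=m$.
\begin{itemize}
\item Each element of ${\mathscr L}(\varphi)$ is then the only one in its $N$-coset, so ${\mathscr L}(\varphi)\cap N=\{1\}$.
\item Since $[N,\varphi_{p'}]$ lies in the sink, $[N,\varphi_{p'}]=1$, and hence $C_N(\varphi)=C_N(\varphi_p)\ne 1$.
\item For $c\in C_N(\varphi)$ and $a=[x,{}_k\varphi]$ with $k\geqslant j_0$, the element $a^c=[x^c,{}_k\varphi]$ lies in ${\mathscr L}(\varphi)$ and in the coset $Na$, so $a^c=a$.
\item Thus $C_N(\varphi)$ centralizes $\langle{\mathscr L}(\varphi)\rangle=G$, contradicting $Z(G)=1$.
\end{itemize}
Hence $|{\mathscr L}_{G/N}(\varphi)|<m$. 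Induction on $m$, applied to the theorem for $G/N=[G/N,\varphi]$ since $Z(G/N)$ may be nontrivial, then bounds $|G/N|$. This is where $Z(G)=1$ does its real work beyond faithfulness, and it is what controls the length of the soluble part.
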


\begin{proof}
We proceed by induction on $m$. If $m=1$, then $\varphi$ is a left Engel element of $G\langle\varphi\rangle$ and therefore belongs to the Fitting subgroup, so that $[G,\varphi ]\langle\varphi\rangle$ is nilpotent and therefore $G=[G,\varphi ]=1$.

Since $G=\langle {\mathscr L}(\varphi)\rangle$ and $Z(G)=1$, the centralizer $C_G(\varphi )$ acts faithfully on $ {\mathscr L}(\varphi)$, and so does $\langle\varphi\rangle$. Hence both $|\varphi|$ and $|C_G(\varphi )|$ are $m$-bounded. Then the quotient by the soluble radical $G/S(G)$ has $m$-bounded order by Hartley's theorem \cite[Theorem~A]{har}.

If $S(G)=1$, then we are done. Otherwise we choose a soluble
minimal normal $\varphi$-invariant subgroup $N$, which is an elementary $p$-group for some prime $p$. We claim that $|N|$ is $m$-bounded. Indeed, let  $\langle\varphi_p\rangle$ and $\langle\varphi_{p'}\rangle$ be the Sylow $p$-subgroup  and  the Hall $p'$-subgroup  of  $\langle\varphi\rangle$, respectively (either of these subgroups can be trivial). Then
$$
C_N(\varphi_p)=C_N(\varphi)\times [C_N(\varphi _p),\varphi _{p'}],
$$
since $\varphi_{p'}$ is a coprime automorphism of $C_N(\varphi_p)$. (This equation also holds if $\varphi_{p'}=1$ or $\varphi_p=1$.) We know that $|C_N(\varphi)|$ is $m$-bounded,  and $|[C_N(\varphi _p),\varphi _{p'}]|$ is $m$-bounded because $[C_N(\varphi _p),\varphi _{p'}]\subseteq {\mathscr L}(\varphi)$ by Lemma~\ref{l-nakr}(b) and Lemma~\ref{l3}(c). Hence $|C_N(\varphi_p)|$ is $m$-bounded. Since
$|N|$ is bounded in terms of $|C_N(\varphi_p)|$ and $|\varphi|$ by Lemma~\ref{l-pp}, and $|\varphi_p|$ is also $m$-bounded, we obtain that $|N|$ is $m$-bounded.

It remains to show that $|G/N|$ is $m$-bounded. We claim that  $|{\mathscr L}_{G/N}(\varphi)|<m$, and then the result will follow by induction. We argue by contradiction, assuming the opposite, that $|{\mathscr L}_{G/N}(\varphi)|=m$. Then every element   $u\in {\mathscr L}( \varphi)$ is the only element of ${\mathscr L}(\varphi)$ in the coset~$Nu$. Furthermore, $[N,\varphi _{p'}]=1$, since $[N,\varphi _{p'}]\subseteq {\mathscr L}(\varphi_{p'})$ by Lemma~\ref{l-nakr}(b) and Lemma~\ref{l3}(c), while ${\mathscr L}(\varphi_{p'})\subseteq {\mathscr L}(\varphi)$ by Lemma~\ref{l3}(b), since $\varphi_{p'}$ is a power of $\varphi$. Therefore $C_N(\varphi)=C_N(\varphi _p)\ne 1$.

For every element $x\in G$ there is a least positive integer $j(x)$ such that $[x,{}_{j(x)}\varphi ]\in {\mathscr L}( \varphi)$. If $[x,{}_{j(x)}\varphi ]\ne 1$, this means that $[x,{}_{j(x)}\varphi ]$ belongs to a limit cycle in ${\mathscr L}( \varphi)$. Then $1\ne [x,{}_j\varphi ]\in {\mathscr L}( \varphi)$ for all $j\geqslant j(x)$.
Let $j_0=\max\{j(x)\mid x\in G\}$.

Every element  $a\in {\mathscr L}(\varphi)$ has the form $a= [x,{}_k\varphi ]$ for some $x\in G$ and some positive integer~$k$. Adding to $k$ if necessary several times the length of the limit cycle in ${\mathscr L}(\varphi)$ containing $a$, we can assume that $k\geqslant j_0$. Now, for every $c\in C_N(\varphi)$ we have
$$
[x,{}_k\varphi ]^c=[x^c,{}_k\varphi^c] =[x^c,{}_k\varphi ] =[x,{}_k\varphi ],
$$
since both $[x^c,{}_k\varphi ]$ and $[x,{}_k\varphi ]$ are elements of ${\mathscr L}(\varphi)$ in the same coset of $N$. Thus, $a^c=a$ for every $a\in {\mathscr L}(\varphi)$ and every $c\in C_N(\varphi)$, which means that $C_N(\varphi)$ centralizes ${\mathscr L}(\varphi)$ and therefore is contained in the centre of $G=\langle{\mathscr L}(\varphi)\rangle$. Since $C_N(\varphi)\ne 1$, this contradicts the hypothesis that $Z(G)=1$.

 Thus, $|{\mathscr L}_{G/N}(\varphi)|<m$, whence $|G/N|$ is $m$-bounded by induction. Since $|N|$ is $m$-bounded, the order $|G|$ is $m$-bounded.
 \end{proof}

We return to the proof of Theorem~\ref{t-l}. By Lemma~\ref{l-lhyp} the quotient by the hypercentre $G/\zeta_{\infty}(G)$ has $m$-bounded order. By a generalization of Baer's theorem proved by Kurdachenko and Subbotin \cite[Theorem~3.2]{kur-sub}, the order of the nilpotent residual $\gamma_{\infty}(G)$ is bounded above in terms of $|G/\zeta_{\infty}(G)|$. Thus, $|\gamma _{\infty}(G)|$ is $m$-bounded. Therefore it remains to prove that $|G/\gamma _{\infty}(G)|$ is $m$-bounded. In other words, we can assume that the group $G$ is nilpotent. Let $G=P_1\times\cdots \times P_k$ be the decomposition of $G$ into a direct product of its Sylow subgroups $P_i$. The condition $G=[G,\varphi ]$ implies that $[P_i,\varphi ]=P_i$ for every $i$. In particular, $ {\mathscr L}_{P_i}(\varphi)\ne \{1\}$ for every $i$. Hence the number of Sylow subgroups is at most $m$. Thus we can assume that henceforth $G$ is a finite $p$-group.

Let $\langle\varphi_{p'}\rangle$ be the Hall $p'$-subgroup of $\langle\varphi\rangle$.  For the Frattini quotient $V=G/\Phi(G)$ of $G$, we have $[V,\varphi_{p'}]=V$, since otherwise $\varphi$ acts as a $p$-automorphism on a nontrivial $p$-group $V/[V,\varphi_{p'} ]$, whence
$[V/[V,\varphi_{p'}],\varphi ]\ne V/[V,\varphi_{p'}]$ contrary to $V=[V,\varphi ]$. Hence, $[G,\varphi_{p'}]=G$.

\begin{lemma}
  \label{l-lclass}
  The nilpotency class of $G$ is $m$-bounded.
\end{lemma}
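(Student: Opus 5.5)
The plan is to pass to the associated Lie ring of the lower central series and to count its nontrivial homogeneous components, using that the coprime part $\varphi_{p'}$ acts on $G/\Phi(G)$ without fixed points.

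\textbf{Setup.} Write $\psi=\varphi_{p'}$ and $U_i=\gamma_i(G)/\gamma_{i+1}(G)$, and let $L(G)=\bigoplus_i U_i$ be the associated Lie ring. Each $U_i$ is a finite abelian $p$-group, $\psi$ acts on it coprimely, and Lie products satisfy $[U_i,U_1]=U_{i+1}$. The aim is to show that the number of $i$ with $U_i\ne 0$ is at most about $2m+1$.

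\textbf{Step 1 (few components carry sink elements).} Since $\psi$ is coprime on $U_i$, Lemma~\ref{l-nakr}(c) gives $U_i=[U_i,\psi]\times C_{U_i}(\psi)$. Lemma~\ref{l-nakr}(b) and Lemma~\ref{l3}(c) applied to $[U_i,\psi]$ show that $[U_i,\psi]$ lies in the left sink of $\psi$ in $U_i$. By Lemma~\ref{l3}(b) this is contained in the sink of $\varphi$ there, since $\psi$ is a power of $\varphi$. Every nontrivial limit cycle of $\varphi$ in the section $\gamma_i(G)/\gamma_{i+1}(G)$ is the image of a limit cycle of $\varphi$ in $\gamma_i(G)$ that misses $\gamma_{i+1}(G)$. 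Hence the sets ${\mathscr L}(\varphi)\cap(\gamma_i(G)\setminus\gamma_{i+1}(G))$ are nonempty for all $i$ with $[U_i,\psi]\ne 1$, and they are pairwise disjoint. Therefore $[U_i,\psi]\ne 1$ for at most $m$ values of~$i$.

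\textbf{Step 2 (no two consecutive centralized components).} Since $[V,\psi]=V$ for $V=G/\Phi(G)$, Lemma~\ref{l-nakr}(c) gives $C_{U_1}(\psi)=1$, because $C_{U_1}(\psi)$ maps onto $C_V(\psi)=1$ and is then trivial by Nakayama-type reasoning on the coprime action. Suppose $\psi$ centralizes $U_i$. Since the Lie bracket is $\psi$-equivariant, the map $U_i\otimes U_1\to U_{i+1}$ is a $\psi$-module epimorphism. Tensoring with a field (or using the decomposition $U_1=[U_1,\psi]$) then shows that $U_{i+1}$ is a quotient of $U_i\otimes U_1$, and this tensor product has no nonzero $\psi$-fixed points when $\psi$ is trivial on $U_i$ and fixed-point-free on $U_1$. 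The fixed-point-free claim follows from coprimeness, since fixed points of a coprime action on a quotient lift (Lemma~\ref{l-nakr}(a)). Hence $C_{U_{i+1}}(\psi)=1$, that is, $U_{i+1}=[U_{i+1},\psi]$.

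If the tensor-product argument is awkward for non-elementary $U_i$, I would first reduce to the elementary abelian layers $\Omega$ or $U_i/pU_i$, since Lemma~\ref{l-nakr}(a) lets fixed points pass through quotients.

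\textbf{Step 3 (counting).} By Step 2, every nontrivial $U_i$ with $[U_i,\psi]=1$ is followed by a component $U_{i+1}$ that is either trivial or satisfies $[U_{i+1},\psi]\ne 1$. By Step 1 there are at most $m$ components of the latter kind. So if $U_c\ne 0$ for the class $c$, then among $U_1,\dots,U_c$ (all nonzero) at most $m$ have $[U_i,\psi]\ne 1$, and the others are isolated between them. This gives $c\leqslant 2m+1$.

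\textbf{Main obstacle.} The main risk is Step 2: checking that the $\psi$-fixed points of $U_{i+1}$ really vanish, via the tensor-product surjection $U_i\otimes U_1\to U_{i+1}$ and coprime covering of fixed points, particularly when the $U_i$ are not elementary abelian.
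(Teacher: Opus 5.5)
Your argument is correct, but it is organized differently from the paper's.

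The paper inducts on $m$. For $c>2$ the subgroup $\gamma_{c-1}(G)$ is abelian, and if $\varphi_{p'}$ centralized it then so would $G=[G,\varphi_{p'}]$, contradicting $\gamma_c(G)\ne 1$. Hence $[\gamma_{c-1}(G),\varphi_{p'}]$ is a nontrivial piece of ${\mathscr L}(\varphi)$ inside $\gamma_{c-1}(G)$, so the sink of $G/\gamma_{c-1}(G)$ is strictly smaller, and induction bounds the class $c-2$ of $G/\gamma_{c-1}(G)$.

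You instead treat all factors $U_i$ simultaneously. Your Step~1 is the same sink-counting device applied to each section, using that the sink in $\gamma_i(G)/\gamma_{i+1}(G)$ is the image of ${\mathscr L}_{\gamma_i(G)}(\varphi)$. Your Step~2 is a graded, layer-by-layer version of the paper's centralizer observation: it shows that each $\psi$-centralized layer is followed by an ``active'' one. This gives a non-inductive proof with an explicit linear bound. In fact your count yields $c\leqslant 2(m-1)$, since $1\in{\mathscr L}(\varphi)$ and $U_1$ is active, which matches what the paper's induction produces. The price is the associated-graded set-up. The paper's route is shorter because it only applies Lemma~\ref{l3} to a genuine abelian subgroup of $G$ and peels off two layers at a time.

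One step should be justified more carefully. The absence of $\psi$-fixed points on $U_i\otimes U_1$ does not follow from Lemma~\ref{l-nakr}(a); that lemma only passes the conclusion down to the quotient $U_{i+1}$. It holds because $\psi-1$, i.e. $u\mapsto[u,\psi]$, is bijective on $U_1$, as $C_{U_1}(\psi)=1$. Then $1\otimes(\psi-1)$ is invertible on $U_i\otimes U_1$, and since the bracket map onto $U_{i+1}$ is $\psi$-equivariant, $U_{i+1}=[U_{i+1},\psi]$. This works over $\mathbb{Z}$, so no reduction to elementary abelian layers is needed.

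Alternatively, you can avoid tensor products altogether. Use the congruence $[[a,g],\psi]\equiv[a,[g,\psi]]\pmod{\gamma_{i+2}(G)}$ for $a\in\gamma_i(G)$ with $[a,\psi]\in\gamma_{i+1}(G)$, together with $G/G'=\{[h,\psi]G'\mid h\in G\}$. Also, $C_{U_1}(\psi)=1$ follows directly from $G=[G,\psi]$ and Lemma~\ref{l-nakr}(c), without the Nakayama-type detour through $G/\Phi(G)$.
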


\begin{proof}
  We use induction on $m$. If $m=1$, then $\varphi$ belongs to the Fitting subgroup of $G\langle\varphi\rangle$ and therefore $G=[G,\varphi ]=1$.

Let $c$ be the nilpotency class of $G$. We can assume that $c>2$. Note that then $\gamma_{c-1}(G)$ is abelian, as $[\gamma_{c-1}(G),\gamma_{c-1}(G)]\leqslant \gamma_{2c-2}(G)=1$, as $2c-2>c$. We claim that $[\gamma_{c-1}(G),\varphi_{p'}]\ne\nobreak 1$. Otherwise, if $\varphi_{p'}$ centralizes $\gamma_{c-1}(G)$, the group $G=[G,\varphi_{p'}]$ also centralizes $\gamma_{c-1}(G)$, contrary to $[\gamma_{c-1}(G),G]=\gamma_{c}(G)\ne 1$. Since $\varphi_{p'}$ is a coprime automorphism of the abelian group $\gamma_{c-1}(G)$, we have $[\gamma_{c-1}(G),\varphi_{p'}]\subseteq {\mathscr L}_{\gamma_{c-1}(G)}(\varphi_{p'})$ by Lemmas~\ref{l-nakr}(b) and~\ref{l3}(c), and ${\mathscr L}_{\gamma_{c-1}(G)}(\varphi_{p'})\subseteq {\mathscr L}_{G}(\varphi)$ by Lemma~\ref{l3}(b), since $\varphi_{p'}$ is a power of $\varphi$. Since $ [\gamma_{c-1}(G),\varphi_{p'}]\ne 1$, we thus obtain that $|{\mathscr L}_{G/\gamma_{c-1}(G)}(\varphi)|<m$. By induction the nilpotency class of $G/\gamma_{c-1}(G)$ is $m$-bounded, and hence so is the class of $G$.
\end{proof}

We now finish the proof of Theorem~\ref{t-l}. We have  $G/G'=[G/G',\varphi ]
\subseteq  {\mathscr L}_{G/G'}(\varphi)$ by Lemma~\ref{l3}(c). Since ${\mathscr L}_{G/G'}(\varphi)$ is the image of ${\mathscr L}_{G}(\varphi)$, we conclude that $|G/G'|\leqslant m$. Combined with the fact that the nilpotency class of $G$ is $m$-bounded by Lemma~\ref{l-lclass}, this implies that $|G|$ is $m$-bounded by Lemma~\ref{l-nilporder}.
\end{proof}

\begin{proof}[Proof of Corollary~\ref{c-lr}]
Recall that $\varphi$ is an automorphism of a group $G$, and we need to show that $|\langle {\mathscr L}(\varphi)\rangle|$ is bounded above  (a) in terms of  $|{\mathscr L}(\varphi)|$, and (b) in terms of $|{\mathscr R}_{G\langle \varphi\rangle}(\varphi)|$.

Clearly, the mapping $u\to [u,\varphi ]$ is surjective on the (minimal) left Engel sink ${\mathscr L}(\varphi)$. Therefore, $[\langle {\mathscr L}(\varphi)\rangle,\varphi ]=\langle {\mathscr L}(\varphi)\rangle$. Hence part (a) follows by Theorem~\ref{t-l}, and part (b) by Theorem~\ref{t-r}.
\end{proof}

\end{document}